\newtheorem{prop}{Proposition}[section]
\newtheorem{thm}[prop]{Theorem}
\newtheorem{cor}[prop]{Corollary}
\theoremstyle{definition}
\newtheorem{defn}[prop]{Definition}
\newtheorem{ex}[prop]{Example}
\newtheorem{rem}[prop]{Remark}
\newtheorem*{ack}{Acknowledgement}
\def\co{\colon\thinspace}
\newcommand{\C}{\mathbb C}
\newcommand{\rmd}{\mathrm d}
\newcommand{\rme}{\mathrm e}
\newcommand{\rmi}{\mathrm i}
\newcommand{\N}{\mathbb N}
\newcommand{\R}{\mathbb R}
\newcommand{\Z}{\mathbb Z}
\newcommand{\lra}{\longrightarrow}
\newcommand{\ra}{\rightarrow}
\newcommand{\wst}{\rmd\mathbf{x}\wedge\rmd\mathbf{y}}
\newcommand{\lamst}{\tfrac12(\mathbf{x}\,\rmd\mathbf{y}-\mathbf{y}\,\rmd\mathbf{x})}
\DeclareMathOperator{\can}{\mathrm{can}}
\DeclareMathOperator{\dist}{\mathrm{dist}}
\DeclareMathOperator{\eh}{\mathrm{EH}}
\DeclareMathOperator{\hz}{\mathrm{HZ}}
\DeclareMathOperator{\im}{\mathrm{im}}
\DeclareMathOperator{\length}{\mathrm{length}}
\DeclareMathOperator{\loc}{\mathrm{loc}}
\DeclareMathOperator{\vol}{\mathrm{vol}}
\begin{document}

\author[Kai Zehmisch]{Kai Zehmisch}
\address{Mathematisches Institut, Universit\"at zu K\"oln, Weyertal 86--90, 50931 K\"oln,Germany}
\email{kai.zehmisch@math.uni-koeln.de}

\title[The codisc radius capacity]{The codisc radius capacity}

\date{23.09.2013}

\begin{abstract}
  We prove a generalization of Gromov's packing inequality
  to symplectic embeddings of the boundaries of two
  balls such that the bounded components of the complements of the
  image spheres are disjoint.
  Moreover,
  we define a capacity
  which measures the size of Weinstein tubular neighbourhoods
  of Lagrangian submanifolds.
  In symplectic vector spaces this leads to bounds
  on the codisc radius for any closed Lagrangian submanifold
  in terms of Viterbo's isoperimetric inequality.
  Furthermore,
  we introduce the spherical variant of the relative Gromov radius
  and prove its finiteness
  for monotone Lagrangian tori in symplectic vector spaces.
\end{abstract}

\subjclass[2010]{53D35, 57R17, 53C22}
\thanks{The author is partially supported by DFG grant
ZE 992/1-1.}

\maketitle

%\tableofcontents

%%%%%%%%%%%%%%%%%%%%%%%%%%%%%%%%%%%%%%%%%%%%%%%%%%%%%%%%%%%%%%%%%%%%%%

\section{Introduction\label{intro}}

A symplectic manifold $(V,\omega)$
is a smooth $2n$-dimensional manifold $V$
together with a non-degenerate closed $2$-form $\omega$.
The most important examples of symplectic manifolds
are $\mathbb{R}^{2n}$ with
\[
\wst
:=
\rmd x_1\wedge\rmd y_1+\ldots+\rmd x_n\wedge\rmd y_n,
\]
the total space of
cotangent bundles
$\pi\co T^*Q\ra Q$
of smooth manifolds $Q$ with
$\rmd\lambda_{\can}$,
where
\[(\lambda_{\can})_u=u\circ T\pi\]
is the Liouville $1$-form
on $T^*Q$,
and all K\"ahler manifolds with its K\"ahler form.
A symplectomorphism is a diffeomorphism $\varphi$
that preserves the symplectic form $\omega$
in the sense that
$\varphi^*\omega=\omega$.
Symplectomorphisms can be obtained
from compactly supported Hamiltonian
functions $H$ on $V$
as the time-$1$-map of the Hamiltonian
vector field $X_H$
which is defined via
$i_{X_H}\omega=-\mathrm{d}H$.
The canonical lift of diffeomorphisms on a
smooth manifold $Q$ to $T^*Q$
gives another class of examples.

The first observation,
which is due to Liouville,
is that a symplectomorphism preserves
the symplectic volume
$\frac{1}{n!}\omega^n$
of a symplectic manifold $(V,\omega)$.
A natural question is
to what extend a
symplectormorphism is more special
than a volume
preserving diffeomorphism.
In \cite{grom85} Gromov gave the following answer.
Consider the cylinder
\[Z_R=\{x_1^2+y_1^2<R\}\times\mathbb{R}^{2n-2}\]
of radius $R$.
Observe that for any radius $r$
there exists a
linear volume preserving diffeomorphism
of $\R^{2n}$
that maps the open ball $B_r$
of radius $r$ into $Z_R$.
But if $B_r$
embeds into $Z_R$
symplectically
Gromov's non-squeezing theorem
implies $r\leq R$,
see \cite[0.3.A]{grom85}.
More generally,
if only a neighbourhood of
the sphere $S^{2n-1}_r=\partial B_r$
in $\R^{2n}$ embeds symplectically
into $Z_R$
it was show in
\cite{geizeh12,swozil12,swozil12b,zehzil12}
that $r<R$ holds.
We call 
the restriction of a symplectic embedding
of a neighbourhood of $S^{2n-1}_r\subset\R^{2n}$
a {\bf symplectic embedding}
of the sphere $S^{2n-1}_r$.

In \cite[0.3.B]{grom85} Gromov proved
that if two open balls $B_{r_1}$ and $B_{r_2}$
embed symplectically into $B_R$
with disjoint images
then the packing inquality
$r_1^2+r_2^2\leq R^2$
holds.
Ziltener asked whether a packing
obstruction holds for symplectic embeddings
of spheres $S_{r_1}^{2n-1}$ and $S_{r_1}^{2n-1}$
such that bounded components
of the complements of the
images are disjoint.
If $n=2$
this follows
from the packing inequality
for balls
because the existence of
embeddings of
$S_{r_1}^3\sqcup S_{r_2}^3$
into $\R^4$
imply the existence of
embeddings of
$B_{r_1}^4\sqcup B_{r_2}^4$,
see \cite[0.3.C]{grom85}.
In Section \ref{applandthm11}
we will prove the following:

\begin{thm}
  \label{spherespacing}
  Let $n\geq2$.
  Assume that $S_{r_1}^{2n-1}\sqcup S_{r_2}^{2n-1}$
  embeds symplectically into $B_R\subset\R^{2n}$
  such that the bounded components
  of the complements of the images are disjoint.
  Then $r_1^2+r_2^2<R^2$.
\end{thm}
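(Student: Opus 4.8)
The plan is to compactify and run a holomorphic-curve argument with the line class of $\mathbb{CP}^n$, in the spirit of the proofs of sphere non-squeezing in \cite{geizeh12,swozil12,swozil12b,zehzil12} but carried out simultaneously along two spheres with a point constraint inside each.

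Preliminaries. By hypothesis each embedding of $S^{2n-1}_{r_i}$ extends to a symplectic embedding $\psi_i$ of a shell $\{r_i-\epsilon<|z|<r_i+\epsilon\}$; put $\Sigma_i=\psi_i(S^{2n-1}_{r_i})$ and let $U_i$ be the bounded component of $\R^{2n}\setminus\Sigma_i$. Since $U_i$ is acyclic (Jordan--Brouwer) and $n\geq2$, the exact form $\omega|_{U_i}$ has a primitive $\mu_i$ agreeing with $(\psi_i)_*\lambda_{\can}$ on the collar; comparing $\vol(U_i)=\tfrac1{n!}\int_{U_i}\omega^n>0$ with $\tfrac1{n!}\int_{\Sigma_i}\mu_i\wedge\omega^{n-1}$ via Stokes forces $\psi_i$ to map the inner half-shell into $\overline{U_i}$, so that $\overline{U_i}$ is a compact symplectic filling of the standard contact sphere $(S^{2n-1},\alpha_{r_i})$ carrying its round Liouville collar, and $\vol(U_i)=\vol(B_{r_i})$. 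From $U_1\cap U_2=\emptyset$ one gets $\Sigma_i\cap U_j=\emptyset$; as two spheres with disjoint bounded complementary components cannot cross transversally, a $C^\infty$-small symplectic perturbation of $\psi_1$ supported near $\Sigma_1\cap\Sigma_2$ makes $\overline{U_1}$ and $\overline{U_2}$ disjoint, without changing the radii or the property of being restrictions of symplectic embeddings of neighbourhoods.

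Main argument. Embed $B_R\hookrightarrow\mathbb{CP}^n$ as the complement of a hyperplane in $(\mathbb{CP}^n,\omega_{FS})$ with $\int_{\text{line}}\omega_{FS}=\pi R^2$, so that $\overline{U_1},\overline{U_2}\subset\mathbb{CP}^n$ are disjoint contact-type fillings and $\omega_{FS}=\omega_{\st}$ near each $\Sigma_i$. Fix interior points $p_i\in U_i$. Since $n\geq2$, there is a unique complex line through two generic points, a nonzero Gromov--Witten count; hence for every $\omega_{FS}$-compatible $J$ there is a $J$-holomorphic sphere in class $L$ through $p_1$ and $p_2$, of symplectic area $\pi R^2$. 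Stretching the neck along $\Sigma_1\sqcup\Sigma_2$ (of contact type, so the compactness theorem of symplectic field theory applies) and passing to a limit building, the part of the building mapping into the completion of $\overline{U_i}$ is a finite-energy curve through $p_i$ whose positive ends are asymptotic to Reeb orbits of $(S^{2n-1},\alpha_{r_i})$, all of action $\pi r_i^2$. Applying Stokes with $\mu_i$ to the portion inside $\overline{U_i}$ shows that this portion has area a positive integer multiple of $\pi r_i^2$, in particular $\geq\pi r_i^2$; the portions inside $\overline{U_1}$ and inside $\overline{U_2}$ are disjoint and the whole building has area $\pi R^2$, so $\pi R^2\geq\pi r_1^2+\pi r_2^2$. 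If equality held, the building would lie entirely in $\overline{U_1}\cup\overline{U_2}$ and hence, being connected, in a single $\overline{U_i}$, which is contractible — contradicting that it carries the nonzero class $L$. Therefore $r_1^2+r_2^2<R^2$.

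The step I expect to be the main obstacle is the neck-stretching: setting up the stretching along the two standard-contact spheres, establishing the SFT compactness of the one-parameter family of lines through $p_1,p_2$ (the line class is indecomposable, which should confine the possible breaking and bubbling), identifying the limit building, and carrying out the area bookkeeping that charges at least $\pi r_i^2$ to the part inside each filling. This is exactly the one-sphere mechanism of the cited papers, run in parallel along two disjoint spheres; by contrast the orientation/volume normalisation of $\overline{U_i}$ and the separating perturbation are routine once the right primitive is in hand.
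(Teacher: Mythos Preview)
Your proposal is correct and is essentially the paper's own argument: compactify $B_R$ to $\mathbb{CP}^n$, use Gromov's holomorphic line through one marked point in each filling, stretch the neck along $\Sigma_1\sqcup\Sigma_2$, and read off $\inf(\alpha_1)+\inf(\alpha_2)<\pi R^2$ from the lowest-level finite energy planes; the paper packages the stretching step as the more general Theorem~\ref{spacing} (for arbitrary restricted contact type hypersurfaces with disjoint fillings) and then specialises to spheres. The only cosmetic difference is the strictness argument: the paper obtains it from the positive energy carried by the top-level component that meets $\mathbb{CP}^{n-1}$, which is really the content of your connectedness/homology-class reasoning once you recall that the limit is a holomorphic building spread across several completed pieces rather than a single curve sitting in $\overline{U_1}\cup\overline{U_2}$.
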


Symplectic embeddings
of balls $B_r\subset\R^{2n}$
into symplectic manifolds $(V,\omega)$
exist by Darboux's theorem.
Any point has a neighbourhood
symplectomorphic to $B_r$
provided $r>0$ is sufficiently small,
cf.\ \cite{hoze94}.
A similar statement is due to
Weinstein \cite{wein71}.
Consider a 
closed Lagrangian submanifold $L$
of $(V,\omega)$,
which is a submanifold
such that $\omega|_{TL}=0$.
A neighbourhood of
the zero section $L$
in the cotangent bundle $T^*L$
is symplectomorphic
to a neighbourhood
of $L$ in $(V,\omega)$.
A natural question is
how large such a neighbourhood can be.

Consider for example
a closed Lagrangian submanifold
$L$ of $\R^{2n}$.
We provide $L$
with the metric $g$ induced
from $\R^{2n}$.
Denote by $D_r^*L$ the
$r$-{\bf codisc bundle} of $L$,
which is
the subset of $T^*L$
consisting of
all covectors of length at most $r$.
By Weinstein's neighbourhood theorem \cite{wein71}
$D_r^*L$ embeds symplectically into $\R^{2n}$
for $r>0$ sufficiently small
identifying the zero section of
$T^*L$ with $L$.
We estimate the largest radius $r$
such that the $r$-codisc bundle
embeds symplectically
in terms of the volume $\vol(L)$
and the
{\bf length of the shortest
  non-trivial closed geodesic}
$\inf(g)$,
which is bounded from below
by the injectivity radius of the metric $g$.
The estimate is obtained
using Viterbo's isoperimetric
inequality \cite{vit00},
see Section \ref{appltothm12}.

\begin{thm}
  \label{isopermest}
  There exists a positive constant $\rho_n$
  such that for any symplectic embedding of $D_r^*L$ into $\R^{2n}$
  the codisc radius $r$ satisfies
  \[
  r^n\leq\rho_n
  \frac{\vol(L)^2}{\inf(g)^n}.
  \]
\end{thm}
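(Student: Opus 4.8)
The plan is to sandwich the symplectic size of a Weinstein neighbourhood of $L$ between $r\cdot\inf(g)$ from below and a dimensional multiple of $\vol(L)^{2/n}$ from above, the latter coming from Viterbo's isoperimetric inequality. So I fix a symplectic embedding $\Phi\co D_r^*L\lra\R^{2n}$ restricting to the identity on the zero section $L$ (such $\Phi$ exists for sufficiently small $r$ by Weinstein's theorem; the point of the theorem is how large $r$ may become), and write $W:=\Phi(D_r^*L)$, a tubular neighbourhood of $L$ symplectomorphic to the $r$-codisc bundle. The core of the argument will be the two estimates
\[
c_{\hz}(W)=r\cdot\inf(g)
\qquad\text{and}\qquad
c_{\hz}(W)\leq\gamma(L),
\]
where $c_{\hz}$ is the Hofer--Zehnder capacity and $\gamma$ is Viterbo's Lagrangian capacity of the closed Lagrangian $L\subset\R^{2n}$; feeding the second one into Viterbo's inequality then finishes the proof.

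For the first estimate I would compute $c_{\hz}(D_r^*L)$ and use symplectic invariance. One may restrict to admissible Hamiltonians radial in the fibres, $H=h(|p|^2_{g^*})$ with support in $\{|p|<r\}$; the flow of $X_H$ then covers the geodesic flow of $(L,g)$ up to a fibrewise time reparametrisation, so every non-constant closed orbit projects onto a closed geodesic, and the orbit lying over a closed geodesic of length $\ell$ on the level $|p|^2=s$ has period of order $\ell/\bigl(\sqrt{s}\,h'(s)\bigr)$. The Hofer--Zehnder admissibility condition—no non-constant orbit of period at most one—therefore translates, through the length $\inf(g)$ of the shortest closed geodesic, into $\max H-\min H\leq r\cdot\inf(g)$, and this bound is approached; since $c_{\hz}$ is a symplectic invariant, $c_{\hz}(W)=c_{\hz}(D_r^*L)=r\cdot\inf(g)$.

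For the second estimate I would invoke the comparison $c_{\hz}(W)\leq\gamma(L)$: the symplectic size of a Weinstein tube around a closed Lagrangian $L\subset\R^{2n}$ is dominated by the global invariant $\gamma(L)$. This is where the codisc radius capacity of the present paper enters, being defined to measure precisely such tube sizes, shown to be a genuine, monotone symplectic capacity, and compared with $\gamma$. Viterbo's isoperimetric inequality \cite{vit00} then provides a dimensional constant $\nu_n$ with $\gamma(L)\leq\nu_n\vol(L)^{2/n}$, the volume being taken with respect to $g$. Chaining the estimates,
\[
r\cdot\inf(g)=c_{\hz}(W)\leq\gamma(L)\leq\nu_n\vol(L)^{2/n},
\]
so $r^n\inf(g)^n\leq\nu_n^n\vol(L)^2$, which is the claim with $\rho_n:=\nu_n^n$.

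The step I expect to be the real obstacle is the comparison $c_{\hz}(W)\leq\gamma(L)$. Its difficulty is that the left-hand side is \emph{intrinsic} to the tube $W$—detected by the closed geodesics of $(L,g)$, hence by local Riemannian data—whereas $\gamma(L)$ is positive only because of the global non-exactness of $L$ inside $\R^{2n}$, a feature that is invisible within $W$ itself, since the zero section is exact in $D_r^*L$. Bridging this gap is exactly what the construction of the codisc radius capacity, together with a monotonicity property under symplectic embeddings, is meant to accomplish. By comparison, the Hofer--Zehnder computation of the first estimate—although it requires care with the reparametrised geodesic flow and with the admissible class—and the final chaining of inequalities are routine.
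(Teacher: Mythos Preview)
Your sandwich strategy has the right shape, but the intermediate invariants you choose do not match the paper's, and the step you correctly flag as the obstacle is a genuine gap that your proposal does not close. The paper's chain is
\[
r\inf(g)\;=\;\inf(\alpha)\;<\;\sigma(L)\;\leq\;d(L),
\qquad d(L)^n\leq\rho_n\vol(L)^2,
\]
where $\alpha=\lambda_{\can}|_{TS_r^*L}$, $\sigma(L)$ is the minimal symplectic area of a non-constant holomorphic disc with boundary on $L$, and $d(L)$ is the displacement energy. The second inequality is Chekanov's theorem, the last is Viterbo's isoperimetric inequality in the form the paper cites. The new ingredient---the bridge between the intrinsic Riemannian quantity $r\inf(g)$ and the global invariant of $L$---is the \emph{action--area inequality} $\inf(\alpha)<\sigma(L)$ (Theorem~\ref{aaineqthm}), proved by stretching the neck along $S_r^*L$: holomorphic discs with boundary on $L$ for a degenerating family of almost complex structures break and produce a punctured finite-energy disc in the completed codisc bundle whose asymptotic Reeb orbits give the bound on $\inf(\alpha)$. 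Neither $c_{\hz}$ nor Viterbo's $\gamma$ appears.

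Your justification for $c_{\hz}(W)\leq\gamma(L)$ via the codisc radius capacity is misplaced. That capacity $c_D$ is the subject of Theorem~\ref{codiscradcap}, not Theorem~\ref{isopermest}; its finiteness is itself \emph{deduced} from Theorem~\ref{aaineqthm} together with Chekanov's bound, so it cannot serve as an independent input here, and the paper never compares it (or anything else) with~$\gamma$. Note also that the equality $c_{\hz}(D_r^*L)=r\inf(g)$ you assert is not a known general fact: radial Hamiltonians give the lower bound, but the matching upper bound for arbitrary $(L,g)$ is precisely the sort of statement whose proof would require a tool like Theorem~\ref{aaineqthm}. If you replace $c_{\hz}$ and $\gamma$ in your chain by $\inf(\alpha)$ and $d(L)$, the missing link becomes exactly Theorem~\ref{aaineqthm} plus Chekanov---which is the paper's proof.
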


The {\bf Gromov radius} is defined by
\[
c_B(V,\omega)=\sup\{\pi r^2\,|\,
\text{$\exists$ symplectic embedding
$B_r\hookrightarrow (V,\omega)$}\},
\]
see \cite{grom85}.
Due to
Gromov's non-squeezing theorem
the quantity $c=c_B$
defines a
{\bf symplectic capacity},
i.e.\
according to Ekeland-Hofer \cite{ekho90},
$c$ satisfies the following conditions:
(Monotonicity)
$c(V,\omega)\leq c(V',\omega')$
provided that there
exists a symplectic embedding
$(V,\omega)\hookrightarrow (V',\omega')$.
(Conformality)
For any positive real number $a$
we have
$c(V,a\omega)=a\, c(V,\omega)$.
(Normalization)
The capacity of the open unit ball $B$
and the open unit cylinder $Z$
are equal to $c(B)=\pi =c(Z)$.
Similarly,
for all symplectic manifolds
$(V,\omega)$ of dimension $\geq4$
the {\bf spherical capacity}
\[
 s(V,\omega):=\sup\{\pi r^2\,|\,
\text{$\exists$ symplectic embedding
  $S^{2n-1}_r\hookrightarrow (V,\omega)$}\}
\]
is a symplectic capacity.
This follows from
the spherical non-squeezing theorem,
see
\cite{geizeh12,swozil12,swozil12b,zehzil12}.

Given a symplectic manifold $(V,\omega)$
a {\bf special capacity}
on subsets $U\subset V$
is a real number
$c(U,\omega)\in [0,\infty]$
satisfying the above
conformality condition
and:
(Non-triviality)
$c(B)$ is positive
and $c(Z)$ is finite;
(Relative monotonicity)
If there
exists a symplectomorphism
of $(V,\omega)$
which maps $U_1$ into $U_2$
then $c(U_1,\omega)\leq c(U_2,\omega)$.

We introduce a quantity
that measures the size
of symplectic neighbourhoods
of Lagrangian submanifolds.
For subsets $U$ in $V$
the {\bf codisc radius capacity}
is defined by
\[
c_D(U)
:=\sup\big\{r\inf(g)\,\big|\,
\text{$(L,g)\subset U$ and $r>0$}\big\},
\]
where the supremum
is taken over all
closed Lagrangian submanifolds
$L\subset U$
of $(V,\omega)$,
over all Riemannian metrics $g$ on $L$,
and over all $r>0$
such that $D^*_r(g)L$
embeds symplectically
into $(V,\omega)$
mapping the zero section
onto $L$.
By Weinstein's neighbourhood theorem
\cite{wein71}
the codisc radius capacity
$c_D(U)$ is positive
for all open subsets
$U$ and all closed
Lagrangian submanifolds $U=L$
of $(V,\omega)$.
In Section \ref{cora}
we will prove the following:

\begin{thm}
  \label{codiscradcap}
  The codisc radius capacity $c_D$
  is a special capacity
  such that
  \[
  c_D(Z)=\pi,\qquad
  c_D(B)\geq\frac{\pi}{n},\qquad\text{and}\qquad
  c_D(P)=\pi,
  \]
  where
  $Z=Z_1$
  is the open unit cylinder,
  $B=B_1$
  is the open unit ball,
  and
  \[
  P=\big\{x_1^2+y_1^2<1,\ldots,x_n^2+y_n^2<1\big\}
  \]
  is the open unit polydisc.
\end{thm}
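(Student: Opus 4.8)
My plan is to verify the three axioms of a special capacity and then compute the three values. Conformality is essentially formal: if $D^*_r(g)L$ embeds symplectically into $(V,\omega)$, then rescaling the metric $g\mapsto a g$ turns the $r$-codisc bundle for $g$ into the $r\sqrt a$-codisc bundle (up to the scaling of fibers) for $ag$ — I would chase the identification $T^*L\cong T^*L$ carefully to see that $D^*_{r}(ag)L$ with $ag$-length at most $r$ corresponds to the set of covectors of $g$-length at most $r\sqrt a$, and that the symplectic form on the total space scales by the same factor; together with $\inf(ag)=\sqrt a\,\inf(g)$ this yields $c_D(V,a\omega)=a\,c_D(V,\omega)$. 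Relative monotonicity is immediate: a symplectomorphism $\psi$ of $(V,\omega)$ carrying $U_1$ into $U_2$ sends any Lagrangian $L\subset U_1$ with its Weinstein neighbourhood to the Lagrangian $\psi(L)\subset U_2$ with the isometric metric and the same admissible radii, so every competitor for $c_D(U_1)$ produces one for $c_D(U_2)$.

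For non-triviality I must show $c_D(B)>0$ and $c_D(Z)<\infty$. Positivity of $c_D(B)$ follows from Weinstein's neighbourhood theorem applied to any closed Lagrangian in $B$ (e.g.\ a small Clifford-type torus), which already gives the stronger bound $c_D(B)\geq\pi/n$ — I address this in the value computation below. Finiteness of $c_D(Z)$, and indeed the sharp value $c_D(Z)=\pi$, is the crux of the proof and where I expect the main difficulty. Here I would combine Theorem 1.3 with the geometry of Lagrangians in a cylinder: if $D^*_r(g)L$ embeds symplectically into $Z=Z_1$, then it embeds into $\R^{2n}$, so Theorem 1.3 gives $r^n\le\rho_n\,\vol(L)^2/\inf(g)^n$, i.e.\ $(r\inf(g))^n\le\rho_n\vol(L)^2$. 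This alone does not bound $r\inf(g)$ because $\vol(L)$ can be large; the extra input is that $L$ lies in $Z_1$, so the projection of $L$ to the first $\C$-factor has image in the disc of area $\pi$, and one controls the symplectic area swept by $L$ — more precisely I would argue that a closed Lagrangian contained in $Z_1$ has "width" bounded by $\pi$ in the sense of the relevant Lagrangian capacity, and that $r\inf(g)$ is dominated by this width. A clean way is to use the spherical non-squeezing input: the codisc bundle $D^*_r(g)L$ contains symplectically embedded spheres of controlled radius, and squeezing them into $Z_1$ forces $r\inf(g)\le\pi$.

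Finally I compute the three values. For $c_D(Z)=\pi$: the upper bound is the finiteness argument above sharpened to the constant $\pi$; the lower bound comes from an explicit family of Lagrangian cylinders (products of a circle in the first factor with a Lagrangian plane, or a suitable compact model) whose codisc bundles of radius $r$ with $r\inf(g)\to\pi$ embed in $Z$ — concretely one takes $L$ with an $S^1$-factor of circumference $\ell$ bounding area just under $\pi$, so $\inf(g)\le\ell$ and the fiber radius can be taken close to $1$, giving products approaching $\pi$. For $c_D(B)\ge\pi/n$: I exhibit a Lagrangian torus $L=\{|z_1|=\cdots=|z_n|=\varepsilon\}$ whose $r$-codisc bundle embeds in $B_1$ for suitable $r$; scaling so that $L\subset B_1$ and optimizing, the closed geodesics of the flat product metric have length $2\pi\varepsilon$ while $r$ can be taken of order $1/(\text{const})$, and balancing the constraints coming from the $n$ factors yields $r\inf(g)\ge\pi/n$. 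For $c_D(P)=\pi$: the upper bound $c_D(P)\le c_D(Z)=\pi$ is monotonicity since $P\subset Z$; the lower bound $c_D(P)\ge\pi$ uses the Clifford torus $L=\{|z_1|=\cdots=|z_n|=1/\sqrt2\}$ — it sits inside $P$ together with a full Weinstein neighbourhood of size making $r\inf(g)$ approach $\pi$, which is the standard optimal Lagrangian-torus packing of the polydisc. I expect that pinning down the exact constants in the $B$ and $P$ computations, and especially rigorously deriving the sharp upper bound $c_D(Z)\le\pi$ from Theorem 1.3 together with non-squeezing, will be the most delicate parts.
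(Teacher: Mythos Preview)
Your plan has a genuine gap in the key step, the upper bound $c_D(Z)\le\pi$. You correctly observe that Theorem~1.2 (the isoperimetric estimate) is useless here because $\vol(L)$ is unconstrained; but your fallback via spherical non-squeezing rests on a misreading of the definition of $c_D$. In the definition, only the Lagrangian $L$ is required to lie in $U=Z$; the codisc bundle $D_r^*(g)L$ is merely required to embed symplectically into the ambient $(V,\omega)=\R^{2n}$, not into $Z$. So there is nothing to ``squeeze into $Z_1$'', and the projection/width heuristics give no direct control on $r\inf(g)$ either. (Theorem~1.2 is also stated only for the metric induced from $\R^{2n}$, whereas $c_D$ allows arbitrary metrics, so it is doubly inapplicable.)

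The paper's argument is different and short. One first proves an action--area inequality (Theorem~3.1): if the closure of $D_r^*(g)L$ embeds symplectically into a symplectically aspherical $(X,\omega)$, then $\inf(\alpha)<\sigma(L)$, where $\alpha$ is the contact form on $S_r^*(g)L$ and $\sigma(L)$ is Gromov's minimal disc area. Combined with the geodesic/Reeb correspondence $r\inf(g)=\inf(\alpha)$ and Chekanov's bound $\sigma(L)\le d(L)$, one gets
\[
r\inf(g)=\inf(\alpha)<\sigma(L)\le d(L)\le d(Z)=\pi,
\]
using only that $L\subset Z$ and that the displacement energy is a special capacity with $d(Z)=\pi$. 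This is exactly the missing idea: the constraint ``$L\subset Z$'' enters through the displacement energy of $L$, not through squeezing the whole Weinstein neighbourhood. Your lower bounds are in the right spirit; the paper makes them explicit via the action--angle type maps $(q,p)\mapsto\sqrt{c+2p}\,\rme^{\rmi q}$ with $c=1$ (for $P$) and $c=1/n$ (for $B$), which embed the $1/2$- and $1/(2n)$-codisc bundles of the flat torus $\R^n/2\pi\Z^n$ and give $r\inf(g)=\pi$ and $\pi/n$ on the nose; the lower bound for $Z$ then follows from $P\subset Z$.
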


A more sensible method
to measure the size
of a symplectic neighbourhood
of a closed Lagrangian submanifold $L$
of $\R^{2n}$
was introduced by
Barraud, Biran and Cornea
\cite{barcor06,barcor07,bircor09,bircor09b}.
A symplectic embedding
of the open ball $B_r\subset\R^{2n}$
of radius $r$ into $\R^{2n}$
is called to be
{\bf relative to} $L$
if the real part
\[
D_r=B_r\cap\R^n
\]
of the ball is mapped to $L$
and if the complement $B_r\setminus D_r$
is mapped to $\R^{2n}\setminus L$.
The {\bf relative Gromov radius} is defined by
\[
c_B(L)=\sup\{\pi r^2\,|\,
\text{$\exists$ relative symplectic embedding
$(B_r,D_r)\hookrightarrow (\R^{2n},L)$}\},
\]
see \cite{barcor06,barcor07,bircor09,bircor09b}.
A Lagrangian submanifold $L$
of $\R^{2n}$ is called {\bf monotone}
if the Liouville class of $L$
and the Maslov class of $L$
are positively proportional.
Finiteness of $c_B(L)$
follows
for monotone Lagrangian tori $L$
with \cite[Theorem 1.2.2]{bircor09b},
cf.\ \cite{buh10,damian12}.
As pointed out by McDuff
\cite[p.\ 125]{mcd09} 
it is not known whether
the relative Gromov radius
is finite in general.

Similar to the spherical
capacity \cite{zehzil12}
we consider symplectic
embeddings of neighbourhoods
$U\subset\R^{2n}$
of $S^{2n-1}_r=\partial B_r$
into $\R^{2n}$
such that the induced neighbourhood
$U\cap\R^n$ of the
equatorial sphere
$S^{n-1}_r=\partial D_r$
is mapped to $L$.
We call those
embeddings to be
{\bf relative to} $L$
if no point from
$U\setminus\R^n$
is mapped to $L$.
The
{\bf relative spherical Gromov radius}
is defined by
\[
s(L)=\sup\{\pi r^2\,|\,
\text{$\exists$ relative symplectic embedding
$(S^{2n-1}_r,S^{n-1}_r)\hookrightarrow (\R^{2n},L)$}\}.
\]
Notice that $c_B(L)\leq s(L)$.
We show finiteness
of $s(L)$
for monotone Lagrangian tori.
The proof is given in Section \ref{prfofthm14}.

\begin{thm}
  \label{finofsphervar}
  Let $2n\geq4$.
  The relative spherical Gromov radius is finite
  for any monotone Lagrangian torus in $\R^{2n}$.
\end{thm}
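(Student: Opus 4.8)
The strategy is to transplant to the spherical setting the $J$--holomorphic curve mechanism by which Biran--Cornea bound the relative Gromov radius of monotone tori (Theorem 1.2.2 of \cite{bircor09b}, cf.\ \cite{buh10,damian12}), with the embedded relative sphere playing the role their embedded relative ball plays. Fix a relative symplectic embedding $\varphi\co U\to\R^{2n}$ of a neighbourhood $U$ of $S^{2n-1}_r$, so that $\varphi(U\cap\R^n)\subset L$ and $\varphi(U\setminus\R^n)\cap L=\emptyset$, and set $\Sigma=\varphi(S^{2n-1}_r)$. Then $\Sigma$ is a hypersurface of contact type whose characteristic foliation is conjugate, via $\varphi$, to the Hopf foliation of $S^{2n-1}_r$; since $\varphi^*\lambda_{\can}-\lambda_{\can}$ is closed and the sphere is simply connected, actions are preserved, so the minimal action of a closed characteristic of $\Sigma$ is $\pi r^2$. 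The sphere $\Sigma$ bounds a compact domain $W\subset\R^{2n}$; the torus $L$ meets $\Sigma$ cleanly in the standard Legendrian sphere $\Lambda=\varphi(S^{n-1}_r)$, whose binormal Reeb chords have action $\tfrac12\pi r^2$; and, because the embedding is relative, $L\cap W$ and $L\setminus\Int W$ fit together along $\Lambda$. I will bound $\pi r^2$ by a fixed multiple of the monotonicity constant $\sigma(L)$ of $L$, the common symplectic area of Maslov-index-$2$ discs with boundary on $L$, which gives $s(L)<\infty$.

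The holomorphic input is that a monotone Lagrangian torus in $\R^{2n}$ bounds a $J$--holomorphic disc of Maslov index $2$ for every tame $J$, the minimal Maslov number being $2$ and every such disc having area $\sigma(L)$, by Cieliebak--Mohnke's resolution of Audin's conjecture; and, more sharply, that in a suitable relative homotopy class the count of such discs through a prescribed point $p_0\in L$ is nonzero and invariant under deformation of $J$ --- this refined count is the part of the Biran--Cornea machinery one has to reuse. I would take $p_0\in\varphi(U\cap\R^n)$ close to the equatorial sphere, so that $p_0$ sits on the $W$--side just next to $\Sigma$.

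Next, stretch the neck along $\Sigma$: choose tame almost complex structures $J_k$, cylindrical on a neck $[-k,k]\times\Sigma$ and --- after a Lagrangian perturbation of $L$ supported near $\Lambda$, which for $n\geq3$ takes place in a simply connected set and hence leaves the Liouville class and monotonicity unchanged, the case $2n=4$ being treated separately via Gromov's filling trick \cite[0.3.C]{grom85} together with McDuff's classification of symplectic fillings of the standard contact $3$--sphere --- making $L$ cylindrical over $\Lambda$ in the neck. The Maslov-$2$ discs $u_k$ through $p_0$ all have area $\sigma(L)$, so by the compactness theorem for holomorphic curves under neck-stretching they converge to a holomorphic building split over the completed filling $\widehat W$ of $(\Sigma,\xi_{\st})$, the symplectization $\R\times\Sigma$, and the completion of $\R^{2n}\setminus\Int W$, with punctures asymptotic to closed characteristics of $\Sigma$ and to Reeb chords of $\Lambda$. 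Since $p_0$ lies next to $\Sigma$ on the $W$--side and is a boundary point of $u_k$, the building has a non-constant piece $v$ in the bottom level $\widehat W$, with boundary on the completion of $L\cap W$ and passing through $p_0$. Being in the bottom level, $v$ has no negative punctures, so Stokes gives $\mathrm{area}(v)=\int_{\partial v}\lambda_{\can}+\sum(\text{actions of the positive punctures of }v)$, and a monotonicity- and Maslov-index accounting over the whole building forces $v$ to carry a puncture of action at least $\tfrac12\pi r^2$ while $\mathrm{area}(v)\leq\sigma(L)$. Hence $\pi r^2\leq2\sigma(L)$, so $s(L)\leq2\sigma(L)<\infty$.

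The hard part is precisely this accounting step. One must rule out the degenerate limit in which nothing crosses $\Sigma$, so that no puncture asymptotic to a characteristic of $\Sigma$ is produced; control how area and Maslov index are distributed among the levels, which in particular requires disposing of the a priori nonzero term $\int_{\partial v}\lambda_{\can}$ along the boundary arcs of $v$ on the monotone torus --- a term with no counterpart in the relative-ball situation, where the corresponding Lagrangian is the isotropic $\R^n$ on which $\lambda_{\can}$ vanishes identically --- and pin down the asymptotics of $v$ as genuine Hopf orbits and binormal chords of action $\pi r^2$ and $\tfrac12\pi r^2$. Establishing the nonzero anchored disc count, carrying out the neck-stretching compatibly with the Lagrangian along the Legendrian $\Lambda$, and the exceptional dimension $2n=4$ are the remaining technical points.
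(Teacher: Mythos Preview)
Your strategy---stretching the neck along the embedded contact sphere, invoking Damian's Maslov-$2$ discs through a marked boundary point, and extracting a finite-energy disc in the bottom level whose asymptotic chord action bounds $\pi r^2$---is exactly the paper's. The two difficulties you flag at the end are real, and the paper resolves both with a single topological observation you are missing: the Legendrian $(n-1)$-sphere $K=L\cap M$ separates the torus $L$, and by the Jordan--Schoenflies theorem applied in the universal cover $\R^n$ one of the pieces, which turns out to be $L\cap W$, is simply connected. This immediately forces every Maslov-$2$ disc to cross $M$, since its boundary circle is nontrivial in $\pi_1(L)$ and hence cannot stay in $L\cap W$; that disposes of your ``degenerate limit''. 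It also kills the boundary term: on the simply connected Lagrangian $(L\cap W)\cup\big([0,\infty)\times K\big)$ the closed $1$-form $\lambda_{\tau}$ is exact and vanishes on the cylindrical end (where it is $\tau\alpha$ with $\alpha|_{TK}=0$), so the boundary arcs of the bottom-level disc can be homotoped into $[0,\infty)\times K$ relative the punctures, and Stokes then equates the Hofer energy with the sum of the asymptotic periods and chord actions, with no residual $\int_{\partial v}\lambda$.

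Two smaller corrections. Your Lagrangian perturbation near $\Lambda$ is unnecessary: taking the primitive $\lambda=\varphi_*\big(\lamst\big)$ on $\varphi(U_\varepsilon)$ and extending it globally, the associated Liouville field is already tangent to $L$ near $M$, so $L$ is cylindrical over $K$ in the neck as it stands. And the round sphere is not generic in Abbas's sense; the paper replaces $S^{2n-1}_r$ by the boundary of an ellipsoid with rationally independent squared semi-axes in $(r-\varepsilon,r+\varepsilon)$, making all Reeb orbits and chords nondegenerate and giving minimal chord action $\tfrac{\pi}{2}(r-\varepsilon)^2$. With these in place the argument runs uniformly for all $n\geq2$, so your separate treatment of $2n=4$ is not needed.
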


%%%%%%%%%%%%%%%%%%%%%%%%%%%%%%%%%%%%%%%%%%%%%%%%%%%%%%%%%%%%%%%%%%%%%%
%%%%%%%%%%%%%%%%%%%%%%%%%%%%%%%%%%%%%%%%%%%%%%%%%%%%%%%%%%%%%%%%%%%%%%

\section{Packing with empty balls\label{pack}}

The aim of this section
is to prove Theorem \ref{spherespacing}.

%%%%%%%%%%%%%%%%%%%%%%%%%%%%%%%%%%%%%%%%%%%%%%%%%%%%%%%%%%%%%%%%%%%%%%

\subsection{Stretching the neck\label{neck}}

We equip $\R^{2n}$ with the symplectic form $\wst$.
A closed hypersurface $M$ is of
{\bf restricted contact type} in $\R^{2n}$
provided there exists a primitive $1$-form of $\wst$
that restricts to a contact form $\alpha$ on $M$.
In particular $(M,\alpha)$ is a contact manifold.
We denote by $\inf(\alpha)$
the
{\bf minimal period of a closed Reeb orbit}
on $(M,\alpha)$.
Because
$(M,\alpha)$
appears as a hypersurface of restricted contact type
in the present context
$\inf(\alpha)$ is the minimal positive
action of
a closed characteristic
on $M$.

\begin{thm}
  \label{spacing}
  Let $n\geq2$.
  Let
  $(M,\alpha)=(M_1,\alpha_1)\sqcup(M_2,\alpha_2)$
  be a closed hypersurface in $\R^{2n}$
  with two connected components $M_1$ and $M_2$
  which are
  of restricted contact type
  and bound disjoint compact domains
  $
  W=W_1\sqcup W_2
  $
  in $\R^{2n}$, resp.
  If $M$ is contained in the open ball $B_R$
  of radius $R$
  then
  \[
  \inf(\alpha_1)+\inf(\alpha_2)<\pi R^2.
  \]
\end{thm}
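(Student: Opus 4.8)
The plan is to run Gromov's packing argument \cite{grom85}, with the monotonicity estimate at the centre of an embedded ball replaced by Stokes' theorem applied to the components of a holomorphic building obtained by stretching the neck simultaneously along $M_1$ and $M_2$. First I would pass to a closed symplectic manifold. Since $M$ and the compact domains $W_i$ lie in the \emph{open} ball $B_R$ (a bounded domain is contained in the convex hull of its boundary), there is an $R_0<R$ with $W_1\sqcup W_2\subset B_{R_0}$; fix $\varepsilon>0$ with $\pi R_0^2+\varepsilon<\pi R^2$. Using the standard symplectomorphism between $\C P^n$ minus a hyperplane and an open ball of capacity $\pi R_0^2+\varepsilon$, I would embed $B_{R_0}$ symplectically into $(\C P^n,\omega)$, where $\omega$ is scaled so that a complex line $L$ has area $\int_L\omega=\pi R_0^2+\varepsilon$. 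The hypersurfaces $M_i$ and domains $W_i$ are carried into $\C P^n$ disjointly, away from the hyperplane at infinity, and $W_i$ becomes a Liouville domain with primitive $\lambda_i$ of $\omega$ obtained from the restricted-contact-type primitive on $\R^{2n}$, so that $\lambda_i|_{M_i}=\alpha_i$.

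Near each $M_i$ I would use the Liouville collar to glue in necks $M_i\times[-\tau,\tau]$, and choose $\omega$-compatible almost complex structures $J_\tau$ that are cylindrical and adapted to $\alpha_i$ along the necks and coincide with a single fixed structure $J_0$ everywhere outside a thin collar of $M_1\cup M_2$ (in particular on the interiors of the $W_i$ and near the hyperplane at infinity). I would then fix, once and for all, two points $p\in\Int W_1$ and $q\in\Int W_2$, at positive distance from $M_1$ resp.\ $M_2$ and in general position. Because the genus-zero Gromov--Witten invariant of $\C P^n$ counting lines through two points equals $1$, for every $\tau$ there is a $J_\tau$-holomorphic sphere $u_\tau$ in the class $[L]$ through $p$ and $q$, with $\int_{u_\tau}\omega=\pi R_0^2+\varepsilon$ independently of $\tau$.

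Now I would let $\tau\to\infty$ and invoke the compactness theorem of symplectic field theory, as in the proofs of spherical non-squeezing \cite{geizeh12,swozil12,swozil12b,zehzil12} (perturbing the $\alpha_i$ to be nondegenerate if convenient). The curves $u_\tau$ converge to a holomorphic building $\mathbf u$ whose components lie in the Liouville completions $\widehat W_i$ of $(M_i,\alpha_i)$, in the symplectizations $M_i\times\R$, and in the completion $\widehat X$ of $\C P^n\setminus(\Int W_1\cup\Int W_2)$, matched along closed Reeb orbits; the $\omega$-areas of the components are nonnegative and sum to $\pi R_0^2+\varepsilon$. The point $p$ lies in the interior of $W_1$ away from the collar, hence is carried by a component of $\mathbf u$ in $\widehat W_1$; by the monotonicity inequality for the fixed structure $J_0$ on a small ball about $p$, the $\omega$-area of $u_\tau$ near $p$ is bounded below independently of $\tau$, so $\mathbf u$ has a \emph{non-constant} component $v_1\subset\widehat W_1$, and likewise a non-constant component $v_2\subset\widehat W_2$. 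Since $\widehat W_i$ carries a primitive $\widehat\lambda_i$ of $\omega$ extending $\lambda_i$, and $v_i$ is non-constant with at least one puncture — all positive, as $\widehat W_i$ is a Liouville completion — Stokes' theorem gives $\int_{v_i}\omega=\sum_j\int_{\gamma_j}\alpha_i\geq\inf(\alpha_i)$, where $\gamma_1,\gamma_2,\dots$ are the asymptotic Reeb orbits of $v_i$.

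Putting the pieces together,
\[
\inf(\alpha_1)+\inf(\alpha_2)\;\leq\;\int_{v_1}\omega+\int_{v_2}\omega\;\leq\;\pi R_0^2+\varepsilon\;<\;\pi R^2,
\]
where the middle inequality uses that all component areas are nonnegative and add up to $\pi R_0^2+\varepsilon$. I expect the main obstacle to be the neck-stretching step: the symplectic field theory compactness theorem must be applied with enough care to guarantee both that the limiting building has non-constant pieces over each of $M_1$ and $M_2$ — which is where the constraint points $p\in\Int W_1$, $q\in\Int W_2$ and the monotonicity inequality come in — and that the $\omega$-energy is distributed additively over the building's components, with the contributions of the fill-side pieces $v_i$ correctly evaluated by Stokes' theorem. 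The passage to $\C P^n$ and the Gromov--Witten input are standard.
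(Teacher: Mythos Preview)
Your proposal is correct and follows essentially the same route as the paper: compactify $B_R$ to $\C P^n$, produce a $J_\tau$-holomorphic line through a point in each $W_i$ via the Gromov--Witten invariant, stretch the neck along $M=M_1\sqcup M_2$, and apply SFT compactness to obtain finite-energy pieces $v_i$ in the completions $\widehat W_i$ whose $\omega$-energy dominates $\inf(\alpha_i)$ by Stokes. The only cosmetic differences are that the paper normalizes $R=1$ with $\int_{\C P^1}\omega=\pi$ rather than scaling the Fubini--Study form, cites the Hofer--Wysocki--Zehnder compactness and records that the lowest level consists of finite energy \emph{planes} (using restricted contact type), and phrases the lower bound via Hofer-energy rather than $\omega$-area; your weaker structural claim (non-constant punctured spheres with only positive ends) already suffices for the estimate.
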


\begin{proof}
  The proof is an application
  of the compactness result in \cite{howyze03}.
  We assume the contact form $\alpha$
  to be generic in the sense
  that $1$ is not an eigenvalue
  of the linearized Poincar\'e return map
  for all closed Reeb orbits on $(M,\alpha)$.
  If $\alpha$ is not generic
  we replace $(M,\alpha)$
  by the graph of a positive function on $M$  
  inside a symplectic tubular neighbourhood
  $\big((-\varepsilon,\varepsilon)\times M,\rmd(\rme^s\alpha)\big)$.
  In view of \cite[Proposition 6.1]{howyze98}
  there is a dense set of positive functions on $M$
  such that the contact form
  obtained by restriction of $\rm e^s\alpha$
  to its graphs is generic.
  An application of the Arzel\`a-Ascoli theorem,
  see \cite{hoze94},  
  and the Liouville flow induced by $\alpha$
  allow to undo the perturbation.
  
  For notational convenience we assume $R=1$
  so that $(M,\alpha)$ is a hypersurface
  of restricted contact type
  in the open unit ball $B$.
  Invoking an argument
  used in \cite[Corollary 3.7]{geizeh13}
  we find a primitive $1$-form
  $\lambda$ of $\wst$
  which is equal to $\lamst$
  on a neighbourhood of $\R^{2n}\setminus B$
  such that $\lambda|_{TM}=\alpha$.
  Collapsing the boundary sphere $S^{2n-1}$
  to the hyperplane $\C P^{n-1}$ at infinity
  yields a symplectic embedding $B\subset\C P^n$,
  where $\C P^n$ is provided
  with the Fubini-Study symplectic form $\omega$.
  Recall that
  \[
  \int_{\C P^1}\omega=\pi,
  \]
  that $\C P^n$ is a monotone symplectic manifold,
  and that through any two distinct points
  in $\C P^n$ it passes a unique complex line.
  With \cite[0.2.B]{grom85} we have
  that for any compatible almost complex structure
  on $\C P^n$ and any pair of distinct 
  points $p_1\in W_1$ and $p_2\in W_2$
  there exists a possibly non-unique holomorphic sphere
  through $p_1$ and $p_2$,
  which is homologous to $\C P^1$.
  
  For each $N\in\N$
  we choose an almost complex structure $J_N$
  which is equal to the complex structure of
  $\C P^n$ restricted to $\C P^{n-1}$.
  Moreover, in a neighbourhood of $M$
  the almost complex structure $J_N$
  is subject to the process of
  {\bf stretching the neck}:
  A neighbourhood of $M\subset B$
  is symplectomorphic to
  $\big([-\varepsilon,\varepsilon]\times M,\rmd(\rme^s\alpha)\big)$
  for $\varepsilon>0$.
  We assume that the points $p_1$ and $p_2$
  inside $W$ are contained
  in the complement of this neighbourhood.
  Denote by $V$ the concave filling
  cut out of $\C P^n$ by $(M,\alpha)$.
  We form a symplectic manifold
  \[
  W\cup\big([-N,N]\times M\big)\cup V
  \]
  by identifying the $\varepsilon$-collar
  neighbourhoods of $M$
  in $W$ and $V$ with the
  corresponding $\varepsilon$-collars
  on the {\bf neck}
  \[
  [-N-\varepsilon,N+\varepsilon]\times M,
  \]
  see \cite[p.\ 273--276]{geig08}.
  The symplectic form on $W\cup V$ is $\omega$
  and on the neck $\rmd (\tau\alpha)$,
  where $\tau$ is a smooth
  strictly increasing function
  on $[-N-\varepsilon,N+\varepsilon]$
  that equals $\rme^{s+N}$
  on $[-N-\varepsilon,-N-\varepsilon/2]$ and
  $\rme^{s-N}$ on $[N+\varepsilon/2,N+\varepsilon]$.
  The resulting manifold is symplectomorphic to $\C P^n$.
  A symplectomorphism is obtained
  by following the Liouville flow
  parallel to the $\R$-direction,
  see \cite[p.\ 158]{howyze03}.
  To finish the construction of $J_N$
  it suffices to define $J_N$
  on $[-N-\varepsilon,N+\varepsilon]\times M$.
  Choose a complex structure $j$
  on the contact structure $\ker\alpha$
  that is compatible with $\rmd\alpha$.
  By definition $J_N$ is
  the unique translation
  invariant almost complex structure
  which sends $\partial_s$
  to the Reeb vector field of $\alpha$
  and coincides with $j$ on $\ker\alpha$.
  Under the identifying symplectomorphism
  this defines $J_N$ near $M$.
  On
  $\C P^n\setminus\big((-\varepsilon,\varepsilon)\times M\big)$
  the almost complex structure $J_N$
  does not depend on $N$.
  
  By the above discussion
  we find for each $N\in\N$
  a $J_N$-holomorphic map
  \[
  w_N\co\C P^1\lra\C P^n,
  \qquad
  C_N:=w_N(\C P^1),
  \]
  such that $p_1,p_2\in C_N$,
  $C_N$ intersects $\C P^{n-1}$ positively
  in exactly one point,
  and $C_N$ has {\bf energy}
  \[
  \int_{C_N}\omega=\pi.
  \]
  We quote the compactness result
  on \cite[p.\ 192--193]{howyze03},
  which applies to the present situation
  because $M$ is of restricted contact type
  in $B$.
  Therefore,
  formulated in the language of \cite{behwz03},
  a subsequence of $w_N$
  converges to a holomorphic building.
  The {\bf lowest level} of the building,
  which corresponds to
  components in
  $W\cup\big([0,\infty)\times M\big)$,
  consists of finite energy planes only,
  cf.\ \cite[p.\ 193, Fig.\ 14]{howyze03}.
  At least two of them, say $u_1$ and $u_2$,
  pass through the points $p_1$ and $p_2$,
  resp. 
  Moreover,
  the total {\bf Hofer-energy}
  \[
  E(u)=\sup_{\tau}\int_{\C}u^*\omega_{\tau}
  \]
  satisfies
  \[
  E(u_1)+E(u_2)<\pi,
  \]
  see \cite{howyze03}.
  Here the supremum is taken
  over all smooth strictly
  increasing functions $\tau$
  on $[-\varepsilon,\infty)$
  that agree with $\rme^s$
  on $[-\varepsilon,-\varepsilon/2]$
  and tend to $1$ as $s\ra\infty$.
  The symplectic form $\omega_{\tau}$
  equals $\rmd\lambda$
  on $W\setminus\big([-\varepsilon,0]\times M\big)$
  and $\rmd(\tau\alpha)$
  on $[-\varepsilon,\infty)\times M$.
  Because the primitive $\lambda$
  extends to $\tau\alpha$ on the cylindrical end
  the finite energy planes $u_1$ and $u_2$
  are asymptotic to closed Reeb orbits
  of period less or equal to its Hofer-energy,
  see \cite{hof93} and cf.\ \cite[Lemma 6.3]{geizeh13}.
  In other words,
  we have found closed Reeb orbits
  in each component of
  $(M_1,\alpha_1)\sqcup (M_2,\alpha_2)$
  having period $T_1$ and $T_2$, resp.,
  such that $T_1+T_2<\pi$.
\end{proof}

%%%%%%%%%%%%%%%%%%%%%%%%%%%%%%%%%%%%%%%%%%%%%%%%%%%%%%%%%%%%%%%%%%%%%%

\subsection{Proof  of Theorem \ref{spherespacing}\label{applandthm11}}

The image $S_r$ of a symplectic embedding of $S^{2n-1}_r$
is a hypersurface of restricted contact type.
This follows with the Mayer-Vietoris sequence
for the de Rham cohomology,
cf.\ the proof of \cite[Corollary 3.7]{geizeh13}.
Moreover, the minimal positive action equals $\pi r^2$.
Theorem \ref{spacing}
implies that the sum of the smallest actions
of $S_{r_1}$ and $S_{r_2}$ is bounded
by the action of $S_R^{2n-1}$.
Therefore, Theorem \ref{spherespacing} follows.

%%%%%%%%%%%%%%%%%%%%%%%%%%%%%%%%%%%%%%%%%%%%%%%%%%%%%%%%%%%%%%%%%%%%%%

\subsection{Superadditivity\label{superadd}}

Smooth boundaries of bounded convex domains $K$
in $\R^{2n}$
are of restricted contact type.
Moreover,
the action-capacity representation theorem
for the Hofer-Zehnder capacity $c_{\hz}$
implies that $c_{\hz}(K)$
is the minimal positive action of a
closed characteristic on $\partial K$,
see \cite{hoze90}.
Because $c_{\hz}$ has inner regularity
Theorem \ref{spacing} yields that
\[
c_{\hz}(K_1)+c_{\hz}(K_2)\leq c_{\hz}(B_R)
\]
for disjoint convex subsets
$K_1$ and $K_2$ of $B_R$.
Artstein-Avidan and Ostrover \cite{artost08}
proved that
\[
c_{\hz}(K_1)^{1/2}+c_{\hz}(K_2)^{1/2}\;\leq c_{\hz}(K_1+K_2)^{1/2}
\]
for bounded convex sets
without assuming that
$K_1$ and $K_2$ are disjoint.
Furthermore,
\cite[Corollary 1.3]{jiang99}
says that
\[
c_{\hz}(U_1)+c_{\hz}(U_2)\leq c_{\hz}(B_R)
\]
for open disjoint subsets
$U_1$ and $U_2$ of $B_R$.
In particular,
$c_{\hz}$ tends to zero on
$B_1\setminus\overline{B_{1-\varepsilon}}$
as $\varepsilon\ra 0$
while the spherical capacity
\cite{zehzil12}
as well as the orbit capacity
\cite{geizeh13,geizeh12}
are equal to $\pi$
for all $\varepsilon\in(0,1)$.
Hence,
neither the spherical capacity
nor the orbit capacity
equal the Hofer-Zehnder capacity.

%%%%%%%%%%%%%%%%%%%%%%%%%%%%%%%%%%%%%%%%%%%%%%%%%%%%%%%%%%%%%%%%%%%%%%

\subsection{Codisc radii\label{pwcodb}}

In the proof of Theorem \ref{spacing}
the existence of finite energy surfaces
contained in the symplectic filling $W$
follows without making use
of the restricted contact type
property of $M\subset B$.
In order to obtain
the estimates on the periods
it suffices that the filling $W$
is exact.
Hence,
Theorem \ref{spacing}
continues to hold e.g.\
for images of codisc bundles.

\begin{cor}
  \label{behwzcase}
  Let $n\geq2$.
  Let $M=M_1\sqcup M_2$
  be a closed hypersurface in $\R^{2n}$
  with two connected components
  that bound disjoint compact domains $W=W_1\sqcup W_2$.
  Let $\lambda$ be a primitive $1$-form of $\wst$
  on $W$
  such that $\alpha_1=\lambda|_{TM_1}$
  and $\alpha_2=\lambda|_{TM_2}$
  are contact forms.
  If $M\subset B_R$ then
  \[
  \inf(\alpha_1)+\inf(\alpha_2)<\pi R^2.
  \]
\end{cor}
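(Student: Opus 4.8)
The plan is to reexamine the proof of Theorem \ref{spacing} and isolate exactly where the restricted contact type hypothesis was used, then weaken it to the hypothesis that the filling $W$ carries a Liouville primitive. First I would extend the given primitive $\lambda$ of $\wst$, which lives only on $W$, to a globally defined primitive $\widetilde\lambda$ of $\wst$ on all of $\R^{2n}$: since $H^1_{\mathrm{dR}}(\R^{2n})=0$, any closed $1$-form is exact, so $\lambda-\lamst$ extended suitably (after a cut-off near $\partial W$ using a collar $(-\varepsilon,\varepsilon)\times M\cong$ neighbourhood of $M$ and the Liouville flow of $\lambda$) can be corrected by a differential of a function to agree with $\lamst$ far outside $B_R$. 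The point is only to get a primitive that restricts to $\alpha_i$ on $M_i$ and that equals $\lamst$ near $\R^{2n}\setminus B_R$, so that the compactification $B_R\subset\C P^n$ via collapsing $S^{2n-1}_R$ to $\C P^{n-1}$ goes through verbatim as in the proof of Theorem \ref{spacing}.

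Next I would run the neck-stretching argument exactly as before: for distinct points $p_1\in W_1$, $p_2\in W_2$ one obtains, for each $N$, a $J_N$-holomorphic sphere $C_N$ homologous to $\C P^1$ of energy $\pi$ through $p_1$ and $p_2$, and passing to a subsequence via the compactness result of \cite{howyze03} yields a holomorphic building whose lowest level (lying in $W\cup([0,\infty)\times M)$) consists of finite energy planes, at least one through each $p_i$, with total Hofer-energy $E(u_1)+E(u_2)<\pi$. Here is the key observation already flagged in Subsection \ref{pwcodb}: the production of these finite energy surfaces in the filling, and the SFT-type compactness, never used that $M$ bounds a domain in $\R^{2n}$ on which a \emph{globally Liouville} primitive exists — it used only that on $W$ there is a primitive $1$-form of the symplectic form restricting to contact forms on the boundary components, i.e.\ that $W$ is an exact filling. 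Since that is precisely the hypothesis of the corollary, the compactness conclusion and the energy bound carry over unchanged.

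Finally, the asymptotic analysis that converts energy bounds into period bounds — each finite energy plane $u_i$ is asymptotic to a closed Reeb orbit of $\alpha_i$ of period at most $E(u_i)$, by \cite{hof93} — also uses only the exactness of the cylindrical end, so $T_1+T_2\leq E(u_1)+E(u_2)<\pi$, and rescaling by $R$ gives $\inf(\alpha_1)+\inf(\alpha_2)<\pi R^2$. The only real point requiring care is the first step: arranging a primitive of $\wst$ on $\R^{2n}$ that simultaneously restricts to the prescribed $\alpha_i$ on $M_i$ and equals $\lamst$ near infinity, so that the $\C P^n$-compactification and the fixed almost complex structure on $\C P^n\setminus((-\varepsilon,\varepsilon)\times M)$ are available. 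This is a routine application of the Mayer–Vietoris/de Rham argument used in \cite[Corollary 3.7]{geizeh13} together with a collar cut-off, so I do not expect a genuine obstacle; everything downstream is a word-for-word repetition of the proof of Theorem \ref{spacing} with ``restricted contact type'' replaced by ``exact filling''.
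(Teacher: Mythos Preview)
There is a genuine gap in your first step: the global primitive $\widetilde\lambda$ you want need not exist. Any primitive of $\wst$ on $\R^{2n}$ has the form $\lamst+\rmd f$, so demanding $\widetilde\lambda|_{TM_i}=\alpha_i$ forces $\alpha_i-\lamst|_{TM_i}$ to be exact on $M_i$. But this form is the restriction to $M_i=\partial W_i$ of the closed $1$-form $\lambda-\lamst$ on $W_i$, and when $H^1(W_i)\neq0$ that class is typically nonzero. The motivating example makes this concrete: if $W_i$ is the image of a codisc bundle $D^*_rL$ with $\lambda$ corresponding to $\lambda_{\can}$, then $[\lambda-\lamst]\in H^1(W_i)\cong H^1(L)$ is (minus) the Liouville class of $L\subset\R^{2n}$, which is nonzero since $\R^{2n}$ admits no closed exact Lagrangian; pulling back along the bundle projection $S^*_rL\to L$ shows the class survives in $H^1(M_i)$. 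Put differently, if your extension argument worked then $M$ would already be of restricted contact type and the corollary would add nothing to Theorem~\ref{spacing}.

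The paper therefore does not attempt to globalize $\lambda$. It keeps the $\C P^n$-compactification and the neck-stretching, but replaces the compactness result of \cite{howyze03}---which, contrary to your claim, \emph{does} rely on the restricted contact type hypothesis, as the proof of Theorem~\ref{spacing} states explicitly---by the SFT compactness of \cite{behwz03}, together with an energy decomposition argument on the stretched neck. The limiting pieces $u_1,u_2$ in the lowest level are then finite energy \emph{surfaces}, possibly with several punctures, rather than planes. The period bound comes from the estimate $E_{\omega}(u_1)+E_{\omega}(u_2)<\pi$ and Stokes using only the primitive $\lambda$ on $W$: the total period of the asymptotic Reeb orbits of $u_i$ equals $E_{\omega}(u_i)$, whence $\inf(\alpha_i)\leq E_{\omega}(u_i)$. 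Your second and third paragraphs are on the right track once you drop the extension step, invoke \cite{behwz03} instead, and allow multi-punctured limits.
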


\begin{proof}
  We assume the splitting situation
  of the proof of Theorem \ref{spacing}
  and consider again the sequence $w_N$
  of $J_N$-holomorphic spheres.
  The energy
  \[
  \int_{C_N}\omega=\int w_N^*\omega+\int w_N^*\rmd(\tau\alpha)
  \]
  decomposes into the sum of two integrals.
  The first one is taken over
  \[
  w_N^{-1}\big(\C P^n\setminus([-\varepsilon,\varepsilon]\times M)\big)
  \]
  and the second over
  \[
  w_N^{-1}\big([-N-\varepsilon,N+\varepsilon]\times M)\big).
  \]
  Here $\tau$ is a smooth increasing
  function on $[-N-\varepsilon,N+\varepsilon]$
  that equals $\rme^{s+N}$
  on $[-N-\varepsilon,-N-\varepsilon/2]$ and
  $\rme^{s-N}$ on $[N+\varepsilon/2,N+\varepsilon]$.
  Notice that the integral is independent
  of the choice of $\tau$,
  cf.\ \cite[p.\ 159]{howyze03}.
  Smoothing the function
  that is defined to be
  $\rme^{s+N}$ on $[-N-\varepsilon,-N]$,
  $1$ on $[-N,N]$, and
  $\rme^{s-N}$ on $[N,N+\varepsilon]$,
  by functions $\tau$ as described
  we get
  \[
  \int_{C_N}\omega=
  \int w_N^*\omega+\int w_N^*\rmd\alpha,
  \]
  where the first integral is taken over
  \[
  w_N^{-1}(\C P^n\setminus M)
  \]
  and the second over
  \[
  w_N^{-1}\big([-N,N]\times M\big).
  \]
  By \cite[Lemma 9.2 and Theorem 10.3]{behwz03}
  there exists a subsequence of $w_N$
  which converges to a holomorphic building.
  Its lowest level contains energy surfaces $u_1$ and $u_2$
  such that $p_1\in \im(u_1)$ and $p_2\in \im(u_2)$.
  \cite[Proposition 5.6]{behwz03}
  implies that these energy surfaces
  are asymptotic to finitely many
  periodic Reeb orbits on $(M,\alpha)$
  with total period $T$.
  With \cite[Lemma 9.1]{behwz03}
  their $\omega$-{\bf energy}
  \[
  E_{\omega}(u_1)+E_{\omega}(u_2)
  \]
  is less than $\pi$,
  where
  \[
  E_{\omega}(u)=
  \int_{u^{-1}(W)} u^*\rmd\lambda
  +\int_{u^{-1}\big([0,\infty)\times M\big)}u^*\rmd\alpha.
  \]
  Therefore,
  by smoothing out the integrand
  and employing an approximation argument as above,
  we see that the total
  period $T$ is less than $\pi$.
\end{proof}

\begin{rem}
  \label{lagrscelet}
  Assuming the situation of
  Corollary \ref{behwzcase}
  let $Y$ be the Liouville vector
  field on $(W,\wst)$
  defined by $\lambda$.
  Because $W$ is compact
  the flow of $Y$ exists on $(-\infty,0]$.
  Therefore,
  $W$ decomposes
  into the Lagrangian skeleton $Y^{-1}(0)$
  and the negative
  half-symplectization
  $\big((-\infty,0]\times M,\rmd(\rme^s\alpha)\big)$.
  If $Y^{-1}(0)$ represents
  a cycle of dimension at most $2n-3$
  then the finite
  energy surfaces $u_1$ and $u_2$
  obtained in the proof
  of Corollary \ref{behwzcase}
  can be homotoped with
  fixed (asymptotic) boundary conditions into $M$.
  This can be used
  to estimate the {\bf minimal total period}
  ${\inf}_{\ell}(\alpha)$
  {\bf of a null-homologous Reeb link}
  in $(M,\alpha)$
  introduced in \cite{geizeh12}.
  Theorem \ref{spacing} and
  Corollary \ref{behwzcase} generalize accordingly.
\end{rem}

\begin{rem}
  \label{reltogoe}
  We consider a
  Riemannian manifold $(L,g)$.
  Using the metric $g$
  we identify the
  tangent bundle of $L$
  with $T^*L$.
  The canonical Liouville $1$-form of $T^*L$
  induces a contact form
  \[
  \alpha=\lambda_{\can}|_{TS_r^*(g)L}
  \]
  on the cosphere bundle $S_r^*(g)L$
  of radius $r$.
  According to \cite{geig08}
  non-trivial closed geodesics
  on $(L,g)$ and
  closed Reeb orbits
  on $S_r^*(g)L$
  are in one-to-one correspondence.
  The {\bf speed curve} $\bar{c}$
  of a closed geodesic $c$,
  which is parametrized
  proportional to arc length,
  with speed $|\dot c|=r$
  is containd in $S_r^*(g)L$.
  It defines a closed Reeb orbit
  $\gamma$ by reprarametrizing
  $\bar{c}$ by $1/r^2$.
  The action of $\gamma$
  and the length of $c$
  are related via
  \[
  \int_{\gamma}\alpha=
  \int_{\bar{c}}\lambda_{\can}=
  r\length(c).
  \]
  The
  {\bf length of the shortest
    non-trivial closed geodesic}
  on $(L,g)$ is denoted by $\inf(g)$,
  which is bounded by the injectivity radius
  from below.
  We have
  \[
  r\inf(g)=\inf(\alpha).
  \]
  
  Assume in the following
  that $L$ decomposes into
  closed submanifolds
  $L_1\sqcup L_2$.
  The metric $g$
  defines Riemannian manifolds
  $(L_1,g_1)$ and $(L_2,g_2)$.
  If
  the closure of the codisc bundles
  \[
  D_{r_1}^*(g_1)L_1\sqcup D_{r_2}^*(g_2)L_2
  \]
  embed symplectically
  into $B_R$ such that the images are disjoint
  Corollary \ref{behwzcase} implies
  \[
  r_1\inf(g_1)+r_2\inf(g_2)<\pi R^2.
  \]
  If in addition $(L,g)$
  has no contractible closed geodesics
  we obtain in view of Remark \ref{lagrscelet}
  that $2r\inf(g)\leq {\inf}_{\ell}(\alpha)$
  using the bundle projection.
  This implies
  \[
  2r_1\inf(g_1)+2r_2\inf(g_2)<\pi R^2.
  \]
  In Section \ref{cora} we continue
  the discussion on the size of
  symplectically embedded codisc bundles.
\end{rem}

%%%%%%%%%%%%%%%%%%%%%%%%%%%%%%%%%%%%%%%%%%%%%%%%%%%%%%%%%%%%%%%%%%%%%%

\subsection{More than two components\label{more}}

We consider a closed hypersurface
\[
M=M_1\sqcup\ldots\sqcup M_k
\]
of $B\subset\R^{2n}$
with $k$ connected components.
We assume that the bounded components
$W_1,\ldots,W_k$
of the complements of
$M_1,\ldots,M_k$
are pairwise disjoint
and that $\wst$
admits a primitive $1$-form
on the closure of
$W=W_1\sqcup\ldots\sqcup W_k$
that restricts to contact forms
$\alpha_1,\ldots,\alpha_k$
on 
$M_1,\ldots,M_k$,
resp.
In Corollary \ref{behwzcase} we
considered the case $k=2$.
The proof of
Theorem \ref{spacing}
and Corollary \ref{behwzcase}
generalizes to hypersurfaces $M$
with $k\geq 3$ connected components
provided that there exists
a holomorphic curve $C$
through $k$ generic points
for any (generic) compatible
almost complex structure.
Therefore,
\[
\inf(\alpha_1)+\ldots+\inf(\alpha_k)
<\int_C\omega.
\]
Observe that the compactness result
in \cite{behwz03}
which we used in the
above proofs
applies to
holomorphic curves of higher genus.

\begin{defn}
  The smallest positive
  action of a closed characteristic
  on $M$ divided by $\pi$
  is denoted by $a_k$.  
\end{defn}

It follows form Corollary \ref{behwzcase}
that $a_k<1/2$ for all $k$.

\begin{ex}
  We consider $B\subset\C P^2$.
  Through $k=3d-1$ generic points
  there exists a
  holomorphic sphere
  of degree $d$,
  which has area
  $\int_C\omega=d\pi$,
  see \cite[Proposition 7.4.8]{mcsa04}.
  Therefore,
  \[
  a_{3d-1}<\frac{d}{3d-1}.
  \]
  Taking genus $\tfrac12(d-1)(d-2)$ curves
  which pass through $k=\frac12d(d+3)$
  points in general position
  and
  whose symplectic area equals $d\pi$
  into account we get
  \[
  a_{\tfrac{d(d+3)}{2}}<\frac{2}{d+3},
  \]
  see \cite[0.2.B]{grom85} and \cite{hls97}.
\end{ex}

%%%%%%%%%%%%%%%%%%%%%%%%%%%%%%%%%%%%%%%%%%%%%%%%%%%%%%%%%%%%%%%%%%%%%%
%%%%%%%%%%%%%%%%%%%%%%%%%%%%%%%%%%%%%%%%%%%%%%%%%%%%%%%%%%%%%%%%%%%%%%

\section{The size of a Weinstein neighbourhood}
\label{tsofweinngbh}

The aim of this section
is to prove Theorem \ref{isopermest}
and Theorem \ref{codiscradcap}.

%%%%%%%%%%%%%%%%%%%%%%%%%%%%%%%%%%%%%%%%%%%%%%%%%%%%%%%%%%%%%%%%%%%%%%

\subsection{The action-area inequality\label{aaineq}}

Let $(X,\omega)$ be a symplectic manifold
which is symplectically aspherical,
i.e.\ the {\bf symplectic area}
$\int_{S^2}f^*\omega$
vanishes for all smooth maps $f\co S^2\ra X$.
We assume that $(X,\omega)$ is either closed,
of bounded geometry
in the sense of Gromov \cite{grom85},
or compact with convex
contact type boundary.
In the latter case we replace
$X$ by its completion
so that $(X,\omega)$
has positive cylindrical ends
as introduced in \cite{behwz03}.

Let $L\subset X$
be a closed Lagrangian submanifold.
The {\bf Gromov width of}
$L$ is defined by
\[
\sigma(L)=\sup_J\sigma(L,J),
\]
where $\sigma(L,J)$ is the {\bf minimal symplectic area}
$\int_Du^*\omega$
of a non-constant $J$-holomorphic
disc $u\co D\ra X$
with boundary on $L$,
see \cite{grom85}.
The supremum
is taken over all
almost complex structures $J$
that are tamed by $\omega$
and have adapted
boundary or asymptotic conditions,
resp.
Notice that $\sigma(L,J)=\infty$
if no such disc exists
and that $\sigma(L,J)>0$
by Gromov's compactness theorem,
cf.\ \cite{fra08}.

Denote by $D^*L$
the unit codisc bundle of $L$
w.r.t.\ a Riemannian metric.
On the unit cotangent bundle $S^*L$
the canonical Liouville $1$-form
$\lambda_{\can}$ defines
a contact form
\[
\alpha=\lambda_{\can}|_{TS^*L}.
\]
The aim is to compare
the Gromov width $\sigma(L)$
with the
minimal period of a closed Reeb orbit
$\inf(\alpha)$.

\begin{thm}
  \label{aaineqthm}
  If the closure of $D^*L$
  embeds symplectically
  into $(X,\omega)$
  then
  \[
  \inf(\alpha)<\sigma(L).
  \]
\end{thm}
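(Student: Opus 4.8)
My plan is to run the neck-stretching argument from the proof of Theorem~\ref{spacing}, with the exact filling $W$ there replaced by the codisc bundle $D^*L$ and the holomorphic spheres replaced by holomorphic discs with boundary on $L$. As recorded in Section~\ref{pwcodb}, only exactness of the filling enters the period estimate, and $D^*L$ is exact with $\lambda_{\can}|_{TL}=0$ on the zero section.

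If $\sigma(L)=\infty$ there is nothing to prove: the Reeb flow of $\alpha$ is the cogeodesic flow on $S^*L$ and a closed manifold carries a non-trivial closed geodesic, so $\inf(\alpha)<\infty$. Assume therefore $\sigma(L)<\infty$. Then for \emph{every} almost complex structure $J$ tamed by $\omega$ with adapted boundary or asymptotic conditions one has $\sigma(L,J)\le\sigma(L)<\infty$, so a non-constant $J$-holomorphic disc with boundary on $L$ and area at most $\sigma(L)$ exists. Using the symplectic embedding $\overline{D^*L}\hookrightarrow(X,\omega)$ I would identify a collar of $S^*L$ with a piece of the symplectization $(\R\times S^*L,\rmd(\rme^s\alpha))$ and, as in the proof of Theorem~\ref{spacing}, replace $\alpha$ by the graph of a generic positive function (to be undone at the end via the Liouville flow) so that all closed Reeb orbits on $(S^*L,\alpha)$ are non-degenerate. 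For $N\in\N$ let $J_N$ agree with a fixed adapted reference structure away from the collar and be cylindrical and adapted to $\alpha$ on the neck $[-N,N]\times S^*L$; this is again an admissible structure, so there is a $J_N$-holomorphic disc $u_N\co D\ra X$ with $u_N(\partial D)\subset L$ and $\int_D u_N^*\omega=\sigma(L,J_N)$, and the monotonicity lemma for the uniformly tame $J_N$ gives $0<\delta\le\sigma(L,J_N)\le\sigma(L)$.

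The key point is that $u_N$ cannot be contained in the image of $D^*L$: there $\omega=\rmd\lambda_{\can}$ and $\lambda_{\can}$ vanishes on $L$, so Stokes' theorem would force $\int_D u_N^*\omega=0$ and hence $u_N$ constant. Thus $u_N$ runs arbitrarily far into the neck as $N\ra\infty$, and the compactness theorem of \cite{behwz03} --- applied exactly as in Corollary~\ref{behwzcase} but to discs, the hypotheses on $(X,\omega)$ furnishing the a~priori bounds --- yields, after passing to a subsequence, a holomorphic building of total $\omega$-energy $A:=\lim_N\sigma(L,J_N)\in[\delta,\sigma(L)]$ with at least two non-trivial levels: a bottom level of curves in the completion $\widehat{D^*L}$, possibly intermediate levels in $\R\times S^*L$, and a top level of curves in the completion of $X\setminus D^*L$. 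Each of these three manifolds is symplectically aspherical, so every closed component is constant; discarding these, the building is still connected and of genus zero. Its bottom level therefore contains a non-constant finite energy curve $v$ in $\widehat{D^*L}$; any such curve has only positive punctures (else it would be a closed disc with boundary on $L$ or a closed sphere, hence constant), asymptotic to closed Reeb orbits $\gamma_1,\ldots,\gamma_k$ with $k\ge1$ of periods $T_1,\ldots,T_k$, and since $\lambda_{\can}|_{TL}=0$ Stokes' theorem in $\widehat{D^*L}$ gives $\int_v\omega=T_1+\ldots+T_k$. Finally, the top level contains a non-constant component: a topmost component cannot lie in a symplectization, for there its $\rmd\alpha$-energy would vanish (it has no positive puncture), forcing it to be a branched cover of a trivial cylinder, which does have a positive puncture; so the top level carries a finite energy punctured sphere in the completion of $X\setminus D^*L$ of strictly positive $\omega$-area. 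Hence $\int_v\omega<A\le\sigma(L)$, and
\[
\inf(\alpha)\ \le\ T_1\ \le\ T_1+\ldots+T_k\ =\ \int_v\omega\ <\ \sigma(L).
\]

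The main obstacle is the compactness and structural analysis of the degenerate building for discs with Lagrangian boundary: one must check that the compactness of \cite{behwz03} applies in the stated generality of $(X,\omega)$, locate a non-constant bottom-level component with Reeb asymptotics, and use symplectic asphericity to discard every closed bubble. Once that is in place the bookkeeping with exact primitives and Stokes' theorem is routine and entirely parallel to Corollary~\ref{behwzcase}.
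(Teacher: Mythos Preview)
Your outline is correct and follows the paper's strategy: stretch the neck along $S^*L$, use that $(D^*L,\lambda_{\can})$ is exact with $\lambda_{\can}|_{TL}=0$ to force the discs across the neck, and read off the period bound via Stokes' theorem from the resulting punctured disc in $\widehat{D^*L}$. The one substantive difference is that you invoke the SFT compactness of \cite{behwz03} for discs with Lagrangian boundary as a black box, whereas the paper carries out the bubbling analysis by hand in the style of \cite{howyze03,hof93,geizeh10}; this is not cosmetic, since \cite{behwz03} as written does not cover Lagrangian boundary conditions, and the hands-on argument isolates the key new point---a boundary disc-bubble (Hofer's case $r<\infty$) would land in $\widehat{D^*L}$ with boundary on $L$ and hence be constant by exactness, so only plane-type bubbles occur and the main limit component is exactly the non-constant punctured disc you need. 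Your ``main obstacle'' is on target, and the paper resolves it directly rather than by citation. One minor point: the uniform lower bound $\delta\le\sigma(L,J_N)$ does not follow from the monotonicity lemma, since the $J_N$ are not uniformly tame under neck-stretching; the correct reason (implicit in the paper) is that the discs must traverse an ever-longer neck, forcing gradient blow-up and hence a bubble of definite energy.
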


\begin{proof}
  The proof is based on
  a stretching the neck argument
  along the lines of Theorem \ref{spacing}.
  Denote by $M$ the image of $S^*L$
  and assume that
  the contact form $\alpha$
  on $M$ is generic.
  Identify $\overline{D^*L}$
  with its image $W$ in $X$ and
  denote the Liouville primitive
  of $\omega|_W$ by $\lambda$.
  Set $V=\overline{X\setminus W}$
  so that $X$ decomposes as $W\cup_MV$.
  
  For each $N\in\N$
  we define a compatible almost complex structure
  $J_N$ on $(X,\omega)$
  as in the proof of Theorem \ref{spacing}
  such that the sequence $J_N$
  only depends on $N$
  in the distinguished neigbhourhood of $M$.
  We choose $J_N$ to be cylindrical, resp.,
  to ensure uniform
  $C^0$-bounds on all holomorphic discs.
  This requires a modification of $J_N$
  in a neighbourhood of $\partial V\setminus M$,
  resp.,
  near the ends of $V$.
  
  We assume that
  $\sigma(L)$ is finite.
  Hence,
  there is a sequence
  of $J_N$-holomorphic discs
  \[
  w_N\co (D,\partial D)\lra (X,L)
  \]
  with energy
  \[
  0<\int_Dw_N^*\omega\leq\sigma(L).
  \]
  Moreover, we assume that $w_N(0)$ is contained in
  $V\setminus\big([0,\varepsilon]\times M\big)$.
  
  As in \cite[p.\ 163]{howyze03}
  we choose a Riemannian metric
  on $X$ of bounded geometry
  which is independent of $N$ on
  $X\setminus\big([-\varepsilon,\varepsilon]\times M\big)$
  and is equal to
  a product metric
  on the neck $[-N,N]\times M$.
  An application
  of the mean value theorem
  to the path $w_N(t)$,
  $t\in[0,1]$,
  shows
  that there are no
  uniform gradient bounds on $w_N$.
  In other words,
  after passing to a subsequence $w_{\nu}$,
  there exists
  a sequence $z_{\nu}\ra z_0$ in $\bar{D}$
  such that
  \[
  R_{\nu}=|\nabla w_{\nu}(z_{\nu})|\lra\infty.
  \]
  We call $z_0$ a {\bf bubbling off point}.
  
  We claim that there are
  only finitely many bubbling off points.
  In view of \cite[Lemma 3.2]{howyze03}
  it is enough to show that
  there exists $c>0$
  such that for any
  (subsequence of a)
  bubbling off sequence $z_{\nu}\ra z_0$
  and for any $\varrho>0$
  \[
  \liminf_{\nu\ra\infty}
  \int_{D_{\varrho}(z_{\nu})}w_{\nu}^*\omega>c.
  \]
  If a bubbling off point
  is contained in the interior of $D$
  the bubbling off argument
  on \cite[p.\ 163--167]{howyze03}
  shows that there exists
  a finite energy plane $v$
  with Hofer-energy $E(v)\leq\sigma(L)$
  in $W\cup\big([0,\infty)\times M\big)$,
  $\R\times M$,
  or $\big((-\infty,0]\times M\big)\cup V$.
  In the first two cases
  we get $\inf(\alpha)\leq E(v)$;
  in the third,
  invoking the compactness theorem
  \cite[Theorem 10.5]{behwz03},
  $E(v)$ is bounded from below
  by a uniform positive constant.
  If a bubbling off point
  is contained on the boundary
  $\partial D$ we distinguish
  following \cite{hof93,geizeh10}
  two cases:
  We view $w_{\nu}$
  as a $J_{\nu}$-holomorphic map
  on the upper half plane $H^+$
  such that the bubbling off point equals $0$.
  Using Hofer's Lemma
  \cite[Lemma 6.4.5]{hoze94}
  we modify $z_{\nu}=x_{\nu}+\rmi y_{\nu}$
  such that
  \[
  R_{\nu}y_{\nu}\lra r
  \]
  for some $r\in[0,\infty]$,
  and that there exists
  a sequence $\varepsilon_{\nu}\searrow 0$
  with $\varepsilon_{\nu}R_{\nu}\ra\infty$
  and
  \[
  |\nabla w_{\nu}(z)|\leq 2R_{\nu}
  \]
  for all $z\in H^+$
  with $|z-z_{\nu}|\leq\varepsilon_{\nu}$.
  
  The first case is $r=\infty$.
  With the rescaling argument
  on \cite[p.\ 560]{geizeh10}
  we obtain a finite energy plane $v$ in
  $W\cup\big([0,\infty)\times M\big)$,
  $\R\times M$,
  or $\big((-\infty,0]\times M\big)\cup V$,
  which has Hofer-energy $E(v)$
  uniformly bounded from below
  as in the above argument.
  It remains to consider
  the case $r<\infty$.
  Replace the sequence $w_{\nu}$
  by the rescaled sequence
  \[
  u_{\nu}(z)=w_{\nu}\big(x_{\nu}+z/R_{\nu}\big).
  \]
  Set $\zeta_{\nu}=\rmi R_{\nu}y_{\nu}$,
  and observe
  that $\zeta_{\nu}\ra\rmi r$
  and $|\nabla u_{\nu}(\zeta_{\nu})|=1$.
  Hence we get
  \[
  |\nabla u_{\nu}(z)|\leq2
  \]
  for all $z\in H^+$
  with $|z-\zeta_{\nu}|\leq\varepsilon_{\nu}R_{\nu}$.
  Identifying $[-N,N]\times M$
  with $[0,2N]\times M$
  we see
  \[
  2\nu\leq\dist\big(u_{\nu}(0),\{2\nu\}\times M\big).
  \]
  With the mean value theorem this implies
  \[
  \nu\leq\dist\Big(0,u_{\nu}^{-1}\big(\{2\nu\}\times M\big)
  \cap D_{\varepsilon_{\nu}R_{\nu}}(\zeta_{\nu})\Big)
  \]
  for all sufficiently large $\nu$.
  In other words,
  each $u_{\nu}$ maps the half-disc $D_R^+$
  into $W\cup\big([0,\infty)\times M\big)$
  provided $R\ll\nu$.
  Hence,
  a subsequence of $u_{\nu}$
  converges in $C^{\infty}_{\loc}$
  to a non-constant holomorphic map
  \[
  u\co (H^+,\R)\lra
  \Big(W\cup \big([0,\infty)\times M\big),L\Big),
  \]
  see \cite[Proposition 6.1 Case 1.1]{geizeh10}
  and \cite[p.\ 168]{howyze03}.
  With the mean value inequality,
  see \cite[Remark 3.54]{abbasbook},
  and the argument before
  \cite[Lemma 6.2]{geizeh10}
  a neighbourhood of $\infty\in H^+$
  is mapped by $u$ into a compact neighbourhood of $L$.
  In view of the finiteness of the Hofer energy of $u$
  the boundary removable of singularities theorem
  \cite[Theorem 4.1.2]{mcsa04} applies.
  That means $u$ extends to a non-constant
  holomorphic disc map with boundary on $L$.
  Because $W\cup\big([0,\infty)\times M\big)$
  provided with $\omega_{\tau}$
  is symplectomorphic to $D^*L$,
  see \cite[Lemma 2.10]{howyze03},
  this s a contradiction,
  i.e.\ the case $r<\infty$ can not occur.
  Consequently,
  there are only finitely many bubbling off points.
  
  Denote the finite set
  of bubbling off points
  by $\Gamma\subset\bar{D}$.
  Recall that $\Gamma\neq\emptyset$.
  In the complement
  of any neighbourhood
  of $\Gamma$ the sequence $w_{\nu}$
  admits uniform gradient bounds.
  Applying the mean value theorem
  we get $C^0$-bounds
  such that a subsequence $w_{\nu}$
  converges in
  $C^{\infty}_{\loc}(\bar{D}\setminus\Gamma)$
  to a punctured holomorphic
  disc $w$ in
  $W\cup\big([0,\infty)\times M\big)$
  with boundary in $L$.
  The Hofer-energy $E(w)$
  is strictly bounded from
  above by $\sigma(L)$.
  We claim that $w$
  is not constant.
  Observe that
  for $\varrho>0$
  sufficiently small
  and $z\in\Gamma$
  we have a uniform bound
  \[
  \liminf_{\nu\ra\infty}\int_{D_{\varrho}(z)}w_{\nu}^*\omega>c.
  \]
  Arguing by contradiction
  we see that all the circles,
  resp., chords
  $w_{\nu}(\partial D_{\varrho}(z))$
  converge in $C^{\infty}$
  to a point in $L$.
  In both cases as
  on \cite[p.\ 85--86]{mcsa04}
  we can extent $w_{\nu}(D_{\varrho}(z))$
  smoothly to sphere maps into $X$.
  If $\nu\gg 1$
  we can assume
  that the symplectic areas
  are positive.
  This contradicts our assumption
  that $(X,\omega)$
  is symplectically aspherical.
  Therefore,
  $w$ is a non-constant
  punctured holomorphic disc.
  All its boundary singular points
  can be removed by the above argument.
  We assume that all its
  removable interior punctures
  are removed as well.
  With \cite[Proposition 2.11]{howyze03}
  $w$ is a finite energy disc
  in $W\cup\big([0,\infty)\times M\big)$
  with boundary on $L$ and positive punctures.
  Taking the primitive $\lambda$ into account,
  which vanishes along $L$,
  an application
  of Stokes's theorem yields
  \[
  \inf(\alpha)\leq E(w)<\sigma(L).
  \]
  This proves the Theorem \ref{aaineqthm}.
\end{proof}

%%%%%%%%%%%%%%%%%%%%%%%%%%%%%%%%%%%%%%%%%%%%%%%%%%%%%%%%%%%%%%%%%%%%%%

\subsection{Proof  of Theorem \ref{isopermest}\label{appltothm12}}

In the two dimensional case
a closed connected Lagrangian
submanifold $L$
is an embedded curve in the plane.
The isoperimetric inequality
implies that the
enclosed bounded domain $D$
has area
$\leq\frac{1}{4\pi}\length(L)^2$.
Notice,
that $L$ divides $D_r^*L$
into two components of equal area.
Precisely one component
is mapped into $D$.
Since a symplectomorphism
preserves the area
the area of $D_r^*L$
is $\leq\frac{1}{2\pi}\length(L)^2$.
Because the metric on $L$
is a positive multiple of the
metric induced by $\R/2\pi\Z$
there exists $\varepsilon>0$
such that
the area of $D_r^*L$
equals $4\pi\varepsilon r$
and $\inf(g)=2\pi\varepsilon$.
It follows that
\[
r\leq\frac{1}{4\pi}
\frac{\length(L)^2}{\inf(g)}.
\]

Let $2n\geq4$.
Consider a symplectic embedding
of the closure of $D_r^*L$
into $\R^{2n}$.
Notice,
that the Lagrangian
submanifold $L$
is displaceable.
A theorem of
Chekanov \cite{chek98}
implies that the
Gromov width $\sigma(L)$
is bounded by
the displacement energy $d(L)$ of $L$.
In \cite{vit00} Viterbo proved
an isoperimetric inequality
$d(L)^n\leq\rho_n\vol(L)^2$
for a
positive constant $\rho_n$.
As explained in
Remark \ref{reltogoe}
we have
$r\inf(g)=\inf(\alpha)$
for the contact form
$\alpha=\lambda_{\can}|_{TS_r^*L}$.
Theorem \ref{aaineqthm} yields
\[
\big(r\inf(g)\big)^n\leq\rho_n\vol(L)^2.
\]
This proves Theorem \ref{isopermest}.

\begin{rem}
  \label{vitest}
  In \cite{vit00} Viterbo proved
  for the volume $\vol(L)$
  of a closed Lagrangian submanifold
  $L$ in $\R^{2n}$
  w.r.t.\ the induced metric $g$
  that
  \[
  d(L)^n\leq\rho_n\vol(L)^2
  \]
  with a positive constant
  \[
  \rho_n\leq\sqrt{2^{n(n-3)}}\; n^n.
  \]
  With the inequality $r\inf(g)\leq d(L)$ 
  we get for the radius
  of a symplectically embedded
  codisc bundle taken
  w.r.t.\ the induced metric
  \[
  \frac{\vol(L)^2}{\inf(g)^n}
  \geq\frac{r^n}{\rho_n}.
  \]
  This inequality remains
  valid for all Riemannian metrics $g$
  induced by any Hamiltonian
  deformation of $L$.
  As \'Alvarez Paiva explained
  to the author
  a computation of the greatest
  value of $r$ in the above
  inequality is related to
  questions in \emph{systolic geometry}.
\end{rem}

%%%%%%%%%%%%%%%%%%%%%%%%%%%%%%%%%%%%%%%%%%%%%%%%%%%%%%%%%%%%%%%%%%%%%%

\subsection{Non-embeddability of the cotangent bundles}
\label{noembofcobundles}

Let $(X,\omega)$
be a symplectically aspherical
symplectic manifold
as described in Section \ref{aaineq}.
In \cite{chek98}
Chekanov proved
for displaceable
Lagrangian submanifolds $L$
the inequality
\[
0<\sigma(L)\leq d(L)<\infty
\]
for the displacement
energy $d(L)$ of $L$.

\begin{cor}
  \label{aaineqcor}
  Let $L\subset(X,\omega)$
  be a closed displaceable Lagrangian submanifold.
  Then there is no symplectic embedding
  of $T^*L$ into $(X,\omega)$ relative $L$.
\end{cor}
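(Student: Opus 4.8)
The plan is to argue by contradiction, using Theorem \ref{aaineqthm} together with the fact that the unit codisc bundle $\overline{D^*L}$ sits inside $T^*L$. Suppose there were a symplectic embedding $\Phi\co T^*L\hookrightarrow(X,\omega)$ relative to $L$, meaning $\Phi$ restricts to the identity (or at least maps the zero section onto $L$) and carries the complement of the zero section off $L$. First I would observe that for \emph{every} $r>0$ the closure of the $r$-codisc bundle $\overline{D_r^*L}$ (taken with respect to any fixed Riemannian metric $g$ on $L$) embeds symplectically into $(X,\omega)$, simply as the restriction of $\Phi$; the image is a genuine Weinstein neighbourhood of $L$ since $\Phi$ is relative to $L$. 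Rescaling the cotangent fibres by $r$ is a symplectomorphism $T^*L\to T^*L$ conjugating $D_1^*(r^2 g)L$ with $D_r^*(g)L$, so equivalently $\overline{D^*L}$ taken with respect to the metric $r^2 g$ embeds symplectically for all $r$.

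Next I would feed this into Theorem \ref{aaineqthm}. For the contact form $\alpha_{r}=\lambda_{\can}|_{TS^*(r^2 g)L}$ on the unit cosphere bundle of $(L,r^2 g)$, Remark \ref{reltogoe} gives $\inf(\alpha_r)=\inf(r^2 g)=r\inf(g)$, where $\inf(g)$ is the length of the shortest non-trivial closed geodesic of $(L,g)$ and is a fixed positive number. Theorem \ref{aaineqthm} then yields
\[
r\inf(g)=\inf(\alpha_r)<\sigma(L)
\]
for all $r>0$. On the other hand, since $L$ is displaceable, Chekanov's inequality quoted just above gives $\sigma(L)\le d(L)<\infty$. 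Letting $r\to\infty$ produces the contradiction $\infty\le d(L)<\infty$, so no such embedding can exist.

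The one point that needs a little care is the hypothesis of Theorem \ref{aaineqthm}: it requires $(X,\omega)$ to be of the three admissible types (closed, bounded geometry, or compact with convex contact type boundary after completion) and symplectically aspherical, which are exactly the standing assumptions carried over from Section \ref{aaineq}, so nothing new is needed there. The only genuine subtlety — and the step I expect to be the main obstacle — is verifying that the restriction of $\Phi$ to $\overline{D_r^*L}$ really is a \emph{Weinstein} embedding in the sense demanded by Theorem \ref{aaineqthm}, i.e.\ that it identifies the canonical Liouville primitive on the codisc bundle with a primitive of $\omega$ near $L$ that vanishes on $L$. This is automatic if one builds $\Phi$ via Weinstein's theorem, but for an \emph{arbitrary} symplectic embedding relative to $L$ one must invoke the uniqueness part of Weinstein's neighbourhood theorem to post-compose with a symplectomorphism of $T^*L$ supported near the zero section, after which the argument above goes through verbatim.
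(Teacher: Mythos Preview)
Your approach matches the paper's: fix a metric $g$, restrict the hypothetical embedding to $\overline{D_r^*(g)L}$ for each $r>0$, combine Theorem~\ref{aaineqthm} with Chekanov's bound $\sigma(L)\le d(L)$ to obtain $r\inf(g)\le d(L)$, and let $r\to\infty$. Two small points: fibrewise rescaling by $r$ is \emph{not} a symplectomorphism of $T^*L$ (it scales $\rmd\lambda_{\can}$ by $r$), but you do not need any such map since $D_1^*(r^2g)L=D_r^*(g)L$ already as subsets of $T^*L$; and your final caveat about matching Liouville primitives is unnecessary, because Theorem~\ref{aaineqthm} only asks for a symplectic embedding sending the zero section to $L$, and the pushforward of $\lambda_{\can}$ then vanishes along $L$ automatically.
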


\begin{proof}
  Let $g$ be a metric on $L$.
  Arguing by contradiction
  we find for all positive $r$
  a symplectic embedding of the
  $r$-codisc bundle of $L$.
  With Theorem \ref{aaineqthm}
  and Chekanov's result \cite{chek98}
  we find
  \[
  r\inf(g)=
  \inf\big(\lambda_{\can}|_{TS^*_r(g)L}\big)
  \leq d(L).
  \]
  Letting $r$ tend to
  infinity yields a contradiction.
\end{proof}

\begin{rem}
  In the particular case
  the symplectic form
  $\omega=\rmd\lambda$
  on $X$ is exact
  the restriction of $\lambda$
  to $TL$ is a closed $1$-form on $L$.
  Its cohomology class $\lambda_L$,
  the so-called {\bf Liouville class},
  cf.\ \cite{polt01},
  is independent of the
  choice of the primitive $\lambda$
  provided $X$ is simply connected.
  Recall,
  that a norm on the space
  of cohomology $1$-classes
  $m$ can be defined
  via
  $\|m\|=\inf\{\sup_L|\mu|\,|\,\mu\in m\}$,
  cf.\ \cite{bates98}.
  If a neighbourhood of the closure
  of the $\|\lambda_L\|$-codisc bundle of $L$
  embeds symplectically relative $L$
  the image $L_{\lambda}$
  of the section into $T^*L$ representing $-\lambda_L$
  is an exact Lagrangian
  submanifold of $(X,\rmd\lambda)$,
  see \cite[Section 7]{arn86}.
  This was pointed out to the author by Polterovich.
  With Chekanov's result \cite{chek98}
  $L_{\lambda}$ is not displaceable.
  In particular,
  no subcritical Stein manifold contains a
  symplectically embedded cotangent
  bundle of a closed manifold,
  cf.\ \cite{cieleli12}.
  Corollary \ref{aaineqcor} serves as a
  generalization to the
  symplectically aspherical case.
\end{rem}

\begin{rem} 
  To give an example of non-embeddability
  of the cotangent bundles
  in the presence of holomorphic spheres
  we make the following remark.
  Barraud, Biran and Cornea
  \cite{barcor06,barcor07,bircor09,bircor09b}
  defined the {\bf relative Gromov radius} $c_B(L)$
  of a closed Lagrangian submanifold $L$
  in a symplectic manifold $(V,\omega)$
  to be the supremum over all $\pi r^2$
  such that there exists a
  symplectic embedding
  $\varphi\co B_r\ra V$
  with $\varphi^{-1}(L)$
  equal to $B_r\cap\R^n\subset\C^n$.
  The relative Gromov radius
  of the zero section of $T^*L$
  is not finite.
  Hence,
  if a cotangent bundle embeds symplectically
  into $(V,\omega)$
  the relative Gromov radius
  of the image of the zero section
  must be infinite in $(V,\omega)$.  
  Consider a closed Lagrangian submanifold
  $L$ in $T^*Q\times\C P^1$,
  where $Q$ is any closed manifold.
  Then $c_B(L)$ is bounded by
  the absolute Gromov radius
  $c_B(T^*Q\times\C P^1)=\pi$.
  Hence,
  no cotangent bundle does embed
  symplectically into $T^*Q\times\C P^1$.
  
  Notice, that it is not known in general
  whether the relative Gromov radius
  of a closed Lagrangian submanifold
  $L$ in $\R^{2n}$ is finite.
  Examples in the monotone case
  can be found in \cite{bircor09b,buh10,damian12}.
  Its spherical variant will be discussed
  in Section \ref{relsphgromrad}.
\end{rem}

%%%%%%%%%%%%%%%%%%%%%%%%%%%%%%%%%%%%%%%%%%%%%%%%%%%%%%%%%%%%%%%%%%%%%%

\subsection{Proof of Theorem \ref{codiscradcap}\label{cora}}

We consider the torus
$\R^n/2\pi\Z^n$
with the metric
induced from $\R^n$
so that
the shortest
non-trivial closed geodesics
have length $2\pi$.
The $n$-fold product of the maps
\[
(q,p)\mapsto\sqrt{1+2p}\;\rme^{\rmi q},
\qquad\text{resp.,}\qquad
\sqrt{1/n+2p}\;\rme^{\rmi q}
\]
embed the
$1/2$-codisc,
resp.,
the $1/2n$-codisc bundle
symplectically into $\R^{2n}$.
The images of the
zero section are
the Clifford tori
$T_1$ and $T_{1/\sqrt{n}}$
which equal the product
of $n$ circles in
$\R^2\times\cdots\times\R^2$
of radius $1$ and $1/\sqrt{n}$,
resp.
The action
on the corresponding
cosphere bundles
induced by the shortest
non-trivial closed geodesics
equal $\pi$ and $\pi/n$,
resp.
This shows that
$c_D(P)\geq\pi$ and
$c_D(B)\geq\frac{\pi}{n}$.

Consider a metric $g$ on $S^1$.
After a reparametrization
there exist a positive
constant $\varepsilon$
such that $g=\varepsilon^2g_0$,
where $g_0$
is the metric induced
by $\R/2\pi\Z$.
Therefore,
$\inf(g)=2\pi\varepsilon$.
Assume that
$D_r^*(g)S^1$
embeds into $\R^2$
preserving the area
such that $S^1$
is mapped into
the open unit disc.
This implies that
$1/2$ times the area of
$D_r^*(g)S^1$
is $<\pi$.
Because the area of
$D_r^*(g)S^1$
is equal to
$4\pi\varepsilon r$
we get $\varepsilon r<1/2$.
Hence,
$r\inf(g)<\pi$.
This proves that
$c_D$ is a capacity in dimension $2$.

If $2n\geq 4$ we argue as follows.
Non-trivial closed
geodesics $c$
on Riemannian manifolds
$(L,g)$
are in one-to-one
correspondence
with closed Reeb orbits
$\gamma$ on
$S_r^*(g)L$
for all positive $r$,
see \cite{geig08}.
The correspondence assigns
to a geodesic $c$
which is assumed to have constant speed
$|\dot c|=r$
the reparametrized
speed curve $\gamma=c\circ 1/r^2$.
The action
$\int_{\gamma}\alpha$,
where
$\alpha=\lambda_{\can}|_{TS_r^*(g)L}$,
equals
$\int_{\bar{c}}\lambda_{\can}=r\length(c)$.
Therefore,
\[
r\inf(g)=\inf(\alpha).
\]
On the other hand
the action-area inequality
in Theorem \ref{aaineqthm}
and Chekanov's result \cite{chek98}
yield
\[
\inf(\alpha)<\sigma(L)\leq d(L).
\]
The displacement energy $d$,
which is known to be a special capacity,
see \cite{hoze94},
takes the value $\pi$
on the open unit ball
and the open unit symplectic cylinder.
Hence,
$c_D(Z)\leq\pi$.
This proves Theorem \ref{codiscradcap}.

\begin{rem}
  Notice that $c_D(L)\leq\sigma(L)$
  and that for subsets $U$ of $\R^{2n}$
  \[
  c_D(U)\leq
  \sigma(U)
  :=\sup\big\{\sigma(L)\,\big|\,
  \text{$L\subset U$}\big\}.  
  \]
  With Hermann's work \cite{herm04}
  one obtains upper bounds for the codisc radius
  in terms of the Floer-Hofer
  resp.\ the Viterbo capacity,
  as well as with \cite{geizeh13,geizeh12}
  in terms of the orbit capacity in dimensions $\geq4$.
\end{rem}

%%%%%%%%%%%%%%%%%%%%%%%%%%%%%%%%%%%%%%%%%%%%%%%%%%%%%%%%%%%%%%%%%%%%%%

\subsection{Monotone Lagrangian submanifolds}
\label{monolagsub}

By Damian's proof
of the Audin conjecture
in the monotone case \cite{damian12}
the minimal Maslov number
of an orientable monotone Lagrangian
submanifold $L$
which admits a metric
of non-positive sectional curvature
equals $2$.
The minimizing class
can be represented by
a holomorphic disc
with boundary on $L$
for any compatible almost
complex structure.
Therefore,
the {\bf minimal symplectic area}
$\inf(L)$
and the Gromov width $\sigma(L)$ are equal.
Hence,
with \cite[Corollary 2.6]{zeh12}
we get
\[
\sigma(L)=\inf(L)\leq\frac{\pi}{n}
\]
provided $L\subset B$.
With the arguments from
Section \ref{cora}
this yields a
special capacity
$c_{D,\mathrm m}$ such that
\[
c_{D,\mathrm m}(Z)=\pi
\qquad\text{and}\qquad
c_{D,\mathrm m}(B)=\frac{\pi}{n}
\]
by restricting
the definition of $c_D$
to Lagrangian submanifolds
that are orientable,
monotone,
and admit a metric
of non-positive sectional curvature.
We remark
that still
the supremum is taken
over \emph{all} Riemannian metrics
on $L$.
Similar to
\cite[Corollary 3.3]{zeh12}
an application of
the capacity $c_{D,\mathrm m}$
implies:

\begin{cor}
  Let $L\subset B_R$
  be an orientable closed monotone
  Lagrangian submanifold
  which admits a metric
  of non-positive sectional curvature.
  Then for all Riemannian
  metrics $g$ on $L$
  and all radii $r$
  such that the corresponding
  $r$-codisc bundle embeds
  into $\R^{2n}$
  symplectically
  we have
  \[
  r\inf(g)\leq\frac{\pi}{n}R^2.
  \]
\end{cor}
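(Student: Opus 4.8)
The plan is to read the estimate off the special capacity $c_{D,\mathrm m}$ constructed above, in the same way \cite[Corollary 3.3]{zeh12} is derived from the capacity considered there. The two facts we may invoke are that $c_{D,\mathrm m}$ is a special capacity — hence invariant under symplectomorphisms and subject to the conformality rule $c(U,a\omega)=a\,c(U,\omega)$ — and that $c_{D,\mathrm m}(B)=\pi/n$ for the open unit ball $B=B_1\subset\R^{2n}$.

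First I would compute $c_{D,\mathrm m}$ on $B_R$. The linear map $z\mapsto Rz$ is a symplectomorphism $(\R^{2n},R^2\wst)\to(\R^{2n},\wst)$ taking $B_1$ onto $B_R$, so by symplectic invariance and conformality
\[
c_{D,\mathrm m}(B_R)=c_{D,\mathrm m}\big(B_1,R^2\wst\big)=R^2\,c_{D,\mathrm m}(B_1)=\frac{\pi}{n}\,R^2.
\]
The only point needing a word here is that conformality for $c_D$ genuinely contributes the factor $R^2$: replacing $\wst$ by $a\wst$ converts an admissible symplectic embedding of $D^*_r(g)L$ into one of $D^*_{r/a}(g)L$ by the fibrewise rescaling $u\mapsto a^{-1}u$ on $T^*L$, and since $\inf(g)$ is unchanged this multiplies the defining quantity $r\inf(g)$ by $a$; this is part of the verification in Section \ref{cora} that $c_D$ is a special capacity.

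Second, I would observe that the data in the statement is exactly of the type entering the supremum that defines $c_{D,\mathrm m}(B_R)$: the submanifold $L$ is orientable, monotone and carries a metric of non-positive sectional curvature, it lies in $B_R$, and $r>0$ is a radius for which the $r$-codisc bundle $D^*_r(g)L$ embeds symplectically into $\R^{2n}$ mapping the zero section onto $L$. Hence $r\inf(g)\le c_{D,\mathrm m}(B_R)=\frac{\pi}{n}R^2$, which is the claim.

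I do not expect a genuine obstacle; the corollary is a formal consequence of Theorem \ref{codiscradcap} in the monotone case. If one prefers not to quote $c_{D,\mathrm m}$ as a black box, the same conclusion follows by unwinding that proof: Theorem \ref{aaineqthm} together with Chekanov's bound and Damian's theorem give $r\inf(g)=\inf(\alpha)<\sigma(L)=\inf(L)$, and rescaling $L$ into the unit ball in \cite[Corollary 2.6]{zeh12} gives $\inf(L)\le\frac{\pi}{n}R^2$. Either way, the mild point requiring care is the scaling bookkeeping relating $B_R$ to $B_1$.
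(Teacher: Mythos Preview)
Your argument is correct and is precisely the one the paper has in mind: the corollary is stated immediately after the computation $c_{D,\mathrm m}(B)=\pi/n$ as a direct application of that special capacity, and your rescaling to $B_R$ together with the observation that $(L,g,r)$ is an admissible competitor in the supremum defining $c_{D,\mathrm m}(B_R)$ is exactly what is intended. One tiny slip: in your parenthetical on conformality the rescaling should produce an embedding of $D^*_{ar}(g)L$ (via $u\mapsto au$), not $D^*_{r/a}(g)L$; with that correction the factor $a$ indeed appears as you claim.
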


%%%%%%%%%%%%%%%%%%%%%%%%%%%%%%%%%%%%%%%%%%%%%%%%%%%%%%%%%%%%%%%%%%%%%%

\subsection{Relation to the link capacity\label{link}}

In \cite{zeh12} the link capacity is defined.
A variant of it can be obtained as follows.
For open subsets $U\subset\R^{2n}$
and closed oriented
monotone Lagrangian
submanifolds $L\subset\R^{2n}$
which admit a Riemannian metric $g$
of non-positive sectional curvature
consider symplectic embeddings
of $D_r^*(g)L$ into $U$ relative $L$.
Then
\[
\ell_{\mathrm m}^+(U)
:=\sup\big\{2r\inf(g)\,|\,
\text{$D_r^*(g)L\hookrightarrow U$, $g$, and $r>0$}\big\},
\]
where the supremum is taken over all metrics $g$
of non-positive sectional curvature.
For the class of Lagrangian submanifolds
$L$ under consideration we define
\[
a_{\mathrm m}^+(U)
:=\sup\{\inf(L)\,|\,
\text{$L\subset U$}\},
\]
cf.\ \cite[Theorem 2.5]{zeh12}.
The action-area inequality from Theorem \ref{aaineqthm} implies:

\begin{cor}
  For all open subsets $U$ of $\R^{2n}$ we have
  \[
  \ell_{\mathrm m}^+(U)\leq 2a_{\mathrm m}^+(U).
  \]
\end{cor}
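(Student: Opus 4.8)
The plan is to chain together the definitions of the two quantities with the action--area inequality. Let $U\subset\R^{2n}$ be open. Fix an admissible Lagrangian submanifold $L\subset U$: closed, oriented, monotone, admitting a metric of non-positive sectional curvature. Fix such a metric $g$ on $L$ and a radius $r>0$ for which there is a symplectic embedding $D_r^*(g)L\hookrightarrow U$ relative $L$. The goal is to show $r\inf(g)\leq a_{\mathrm m}^+(U)$; taking the supremum over all such $(L,g,r)$ and doubling then yields $\ell_{\mathrm m}^+(U)\leq 2a_{\mathrm m}^+(U)$.

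First I would record, exactly as in Remark~\ref{reltogoe} and in the proof of Theorem~\ref{codiscradcap}, that for the contact form $\alpha=\lambda_{\can}|_{TS_r^*(g)L}$ one has $r\inf(g)=\inf(\alpha)$, using the one-to-one correspondence between non-trivial closed geodesics on $(L,g)$ and closed Reeb orbits on the cosphere bundle together with the identity $\int_\gamma\alpha=r\length(c)$. Next, since $U\subset\R^{2n}$ is symplectically aspherical and of bounded geometry in Gromov's sense, Theorem~\ref{aaineqthm} applies to the symplectic embedding of $\overline{D_r^*(g)L}$ into $U$ (after shrinking $r$ slightly if necessary to embed the closure, then letting it return to $r$ by the Liouville flow), giving
\[
\inf(\alpha)<\sigma(L).
\]
Finally, because $L$ is orientable, monotone, and carries a metric of non-positive sectional curvature, Damian's solution of the Audin conjecture in the monotone case forces the minimal Maslov number to be $2$ and the minimizing class to be represented by a holomorphic disc for every compatible almost complex structure; hence the minimal symplectic area $\inf(L)$ and the Gromov width $\sigma(L)$ coincide, so $\sigma(L)=\inf(L)\leq a_{\mathrm m}^+(U)$ by the very definition of $a_{\mathrm m}^+(U)$ (here the embedding being relative $L$ guarantees $L\subset U$). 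Combining the three displays gives $r\inf(g)=\inf(\alpha)<\sigma(L)=\inf(L)\leq a_{\mathrm m}^+(U)$, and the corollary follows.

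The main point requiring care is the passage $\sigma(L)=\inf(L)$: one must check that the class realizing the minimal Maslov number is also the one of minimal symplectic area, which in the monotone case is automatic since area and Maslov index are positively proportional, but it is worth stating explicitly. A secondary technical point is the closure issue in applying Theorem~\ref{aaineqthm} — that $\overline{D_r^*(g)L}$ rather than $D_r^*(g)L$ must embed — which is handled by a standard rescaling argument using the Liouville flow, exactly as in the proof of Theorem~\ref{spacing}. Everything else is bookkeeping with the definitions.
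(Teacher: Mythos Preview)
Your argument is correct and is exactly the route the paper intends: the paper's ``proof'' is the single sentence that the corollary follows from the action--area inequality (Theorem~\ref{aaineqthm}), and you have spelled out the chain $r\inf(g)=\inf(\alpha)<\sigma(L)=\inf(L)\leq a_{\mathrm m}^+(U)$ using the geodesic--Reeb correspondence, Theorem~\ref{aaineqthm}, and the identification $\sigma(L)=\inf(L)$ from Section~\ref{monolagsub} via Damian's theorem.

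One small slip: you assert that $U$ itself is of bounded geometry, which need not hold for an arbitrary open subset of $\R^{2n}$. The fix is to apply Theorem~\ref{aaineqthm} with ambient manifold $X=\R^{2n}$ rather than $X=U$; the embedding $D_r^*(g)L\hookrightarrow U\subset\R^{2n}$ is then an embedding into $\R^{2n}$, and $\sigma(L)$, $\inf(L)$ are computed there anyway. With that adjustment your argument is complete.
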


With
\cite[Theorem 3.1]{zeh12}
we obtain
\[
\ell_{\mathrm m}^+(Z)=\pi,\qquad
\ell_{\mathrm m}^+(B)\in\left[\frac{\pi}{n},\frac{2\pi}{n}\right].
\]
Motivated by the work
of Cieliebak and Mohnke
on the Lagrangian capacity
\cite{cielmoh,chls07}
we \emph{conjecture}
that the link capacity
on the unit ball equals $\pi/n$.

%%%%%%%%%%%%%%%%%%%%%%%%%%%%%%%%%%%%%%%%%%%%%%%%%%%%%%%%%%%%%%%%%%%%%%
%%%%%%%%%%%%%%%%%%%%%%%%%%%%%%%%%%%%%%%%%%%%%%%%%%%%%%%%%%%%%%%%%%%%%%

\section{The relative spherical Gromov radius}
\label{relsphgromrad}

The aim of this section
is to prove Theorem \ref{finofsphervar}.

%%%%%%%%%%%%%%%%%%%%%%%%%%%%%%%%%%%%%%%%%%%%%%%%%%%%%%%%%%%%%%%%%%%%%%

\subsection{Proof of Theorem \ref{finofsphervar}}
\label{prfofthm14}

Recall that a Lagrangian submanifold
$L$ in $\R^{2n}$ is monotone
if there exists a positive
real number $\eta$,
the so-called
{\bf monotonicity constant of} $L$,
such that the Liouville class
$\lambda_L$ and the Maslov
class $\mu_L$ of $L$
satisfy $\lambda_L=\eta\mu_L$.
The proof of Theorem \ref{finofsphervar}
below will show
that monotone Lagrangian tori satisfy
\[
s(L)\leq 4\eta.
\]
Notice that the minimal
positive symplectic area of a smooth
disc with boundary on $L$
is equal to $2\eta$.
Moreover,
it is attained by
a holomorphic disc with Maslov number $2$,
see \cite{buh10,damian12}.
A theorem of
Chekanov \cite{chek98}
implies that
\[
s(L)\leq 2d(L),
\]
where $d(L)$ denotes
the displacement energy
of $L$,
see \cite{hoze94}.
If in addition
there is a metric
of non-positive sectional
curvature on $L$
we get with \cite[Theorem 2.5]{zeh12}
\[
s(L)\leq \frac2k c_k^{\eh}(L)
\]
for all $k\in\N$
and the $k$-th Ekeland-Hofer
capacity \cite{ekho90}.

\begin{proof}[{\bf Proof of Theorem \ref{finofsphervar}}]
  The proof of the theorem is an application of the relative neck stretching argument
  due to Abbas \cite{abbasbook}.
  We consider a relative symplectic embedding $\varphi$ of $(S^{2n-1}_r,S^{n-1}_r)$ into $(\R^{2n},L)$.
  For $\varepsilon>0$ small enough we can assume that the neighbourhood
  on which $\varphi$ is defined
  contains the spherical shell
  \[
  U_{\varepsilon}=B_{r+\varepsilon}\setminus\overline{B_{r-\varepsilon}}.
  \]
  We consider the symplectic ellipsoid
  \[
  E=\Bigl\{\tfrac{x_1^2+y_1^2}{r_1^2}+\ldots+\tfrac{x_n^2+y_n^2}{r_n^2}<1\Bigr\}
  \]
  for real numbers
  \[
  r-\varepsilon<r_1<r_2<\ldots<r_n<r+\varepsilon
  \]
  and denote by $M=\varphi(\partial E)$
  the image of the boundary.
  Because $M$ is simply connected
  we find a global primitive $1$-form $\lambda$ of $\wst$
  such that 
  \[
  \lambda=\varphi_*\big(\lamst\big)
  \]
  on $\varphi(U_{\varepsilon})$.
  Notice that $\lambda$ vanishes on the tangent spaces of $L\cap\varphi(U_{\varepsilon})$.
  Denote by $\alpha$ the restriction of $\lambda$ to $TM$.
  Let $W$ and $V$ be the closures of the components of the complement of $M$
  such that
  \[
  \R^{2n}=W\cup_MV
  \]
  is a decomposition
  into a Liouville filling $(W,\lambda)$ of the contact type hypersurface $(M,\alpha)$
  and a symplectic manifold $(V,\rmd\lambda)$ with concave boundary $(M,\alpha)$.
  Observe,
  that the $(n-1)$-sphere $K=L\cap M$ bounds a domain inside the torus $L$
  such that one component $L\cap W$ or $L\cap V$ of the complement is simply connected.
  This follows with the Jordan-Schoenflies theorem \cite{brown60,mazur59, morse60}
  applied to the universal cover $\R^n$.
  
  We choose the radii $r_1,\ldots,r_n$ such that in addition
  there squares are rationally independent.
  Then all closed Reeb orbits on $(M,\alpha)$ correspond to intersections of $\partial E$
  with the complex coordinate axes
  and each Reeb chord of the Legendrian submanifold $K$
  is contained in a closed Reeb orbit.
  Moreover, the contact form $\alpha$ and the pair $(\alpha,K)$
  are generic in the sense of \cite[Chapter 3.2]{abbasbook},
  i.e.\ the linearized Poincar\'e return map restricted to the contact structure $\ker(\alpha)$
  at any periodic point of the Reeb flow has no eigenvalue $1$,
  and
  whenever the isotopic image $K'$ of $K$ under the Reeb flow intersects $K$ itself,
  the contact structure $\ker(\alpha)$ is spanned by the tangent spaces to $K$ and $K'$ at the intersection points.
  Therefore, the genericity assumptions of the compactness theorem in \cite{abbasbook}
  are satisfied.
  
  In order to define
  a sequence of almost complex
  structures on $\R^{2n}$
  we describe a symplectic neighbourhood
  $U\subset\varphi(U_{\varepsilon})$ of $M$.
  Let $Y$ be the Liouville
  vector field dual to $\lambda$.
  Following its flow near $M$
  in forward and backward
  time we obtain a symplectomorphic model
  \[
  \big([-\delta,\delta]\times M,\rmd(\rme^s\alpha)\big)
  \]
  of $U$ for $\delta>0$,
  see \cite{geig08},
  which we call the {\bf neck}.
  Notice that $Y$,
  which is mapped to $\partial_s$,
  is tangent to $L\cap U$
  so that the intersection of the Lagrangian submanifold $L$ with $U$
  corresponds precisely to $[-\delta,\delta]\times K$.
  In the same way we obtain for each $N\in\N$ a symplectomorphic copy of the neck
  \[
  \big([-N-\delta,N+\delta]\times M,\rmd(\tau\alpha)\big)
  \]
  with Lagrangian submanifold $[-N-\delta,N+\delta]\times K$,
  where $\tau$ is a smooth strictly increasing function on $[-N-\delta,N+\delta]$,
  which equals $\rme^{s+N}$ on $[-N-\delta,-N-\delta/2]$ and
  $\rme^{s-N}$ on $[N+\delta/2,N+\delta]$,
  see \cite[p.\ 158]{howyze03}.
  We define a translation invariant almost complex structure on the $N$-neck as follows:
  On $\ker(\alpha)$ it is required to restrict to a complex structure compatible with $\rmd\alpha$
  and on its complement it is required to map $\partial_s$ to
  the Reeb vector field of $\alpha$.
  This defines an almost complex structure $J_N$ on $U$ for each $N\in\N$.
  Near the boundary of $U$ it is independent of $N$
  and therefore can be extended to $\R^{2n}$ in a uniform way, see \cite{grom85}.
  Moreover,
  $J_N$ equals the complex structure of $\C^n$
  outside a fixed large ball.
  
  We consider the case where $L\cap W$ is simply connected.
  \emph{The case of $L\cap V$ being simply connected can not occur
  because otherwise there would be a Reeb chord on $K$ with negative action
  by the analogue of the following argument:}
  Let $p$ be a point in the interior of $L\cap W$.
  Because the Lagrangian torus $L$ is monotone
  we can apply Damian's result \cite[Theorem 1.5.(c)]{damian12}.
  Therefore,
  we find for any $N$ a $J_N$-holomorphic disc $u_N\co D\ra\R^{2n}$ through $p$ with boundary on $L$
  and Maslov index $2$.
  In particular,
  the energy of $u_N$ is
  \[
  \int_Du_N^*\rmd\lambda=2\eta
  \]
  for all $N$,
  where $\eta$ is the monotonicity constant of $L$.
  Notice,
  that the boundary curves $u_N(\partial D)\subset L$
  are not entirely contained in $L\cap W$
  because these are not contractible in $L$.
  
  By Abbas's compactness theorem \cite{abbasbook} a subsequence of $u_N$
  converges to a holomorphic building of total Hofer-energy equal to $2\eta$.
  Its level structure corresponds to $W\cup\big([0,\infty)\times M\big)$,
  several (or non) copies of $\R\times M$,
  and $\big((-\infty,0]\times M\big)\cup V$.
  The boundary of the building lies in
  $(L\cap W)\cup\big([0,\infty)\times K\big)$,
  the corresponding copies of $\R\times K$,
  and $\big((-\infty,0]\times K\big)\cup (L\cap V)$
  resp.,
  cf. \cite{behwz03,howyze03}.
  Moreover,
  the building consists of punctured holomorphic spheres
  and discs with Lagrangian boundary conditions
  such that at least one disc is contained in each level.
  Over the interior punctures these are asymptotic to closed Reeb orbits of $\alpha$
  and over boundary punctures to non-constant Reeb chords of $(\alpha,K)$,
  see \cite{abb99,hof93}.
  In particular, in $W\cup\big([0,\infty)\times M\big)$ there exists a finite energy disc $u$
  with at least one boundary puncture.
  Its Hofer-energy satisfies
  \[
  E(u)=\sup_{\tau}\int_{D\setminus\Gamma}u^*\rmd\lambda_{\tau}<2\eta,
  \]
  where $\Gamma\subset D$ is the set of punctures of $u$.
  The supremum is taken over all smooth (strictly) increasing functions $\tau$ on $[-\delta,\infty)$
  which equal $\rme^s$ on $[-\delta,-\delta/2]$ and converge to $1$ as $s\ra\infty$.
  The $1$-form $\lambda_{\tau}$ is given by $\lambda$ on $W\setminus\big([-\delta,0]\times M\big)$
  and by $\tau\alpha$ on $[-\delta,\infty)\times M$.
  Notice that $\lambda_{\tau}$ vanishes on vectors tangent to $[-\delta,\infty)\times K$.
  The boundary curves $u(\partial D\setminus\Gamma)$ can be homotoped into
  $[-\delta,\infty)\times K$ relative neighbourhoods of the punctures $\Gamma$
  because $L\cap W$ is simply connected.
  An application of Stokes's theorem
  (taking the asymptotic and the boundary conditions into account)
  yields
  that the sum of all periods of Reeb orbits over (the possibly empty set of)
  interior punctures of $u$
  and of the longueurs (i.e.\ the actions w.r.t.\ $\alpha$) of all Reeb chords
  over (the non-empty set of) boundary punctures of $u$
  is strictly less than $2\eta$.
  Therefore, we get
  \[
  \frac{\pi}{2}(r-\varepsilon)^2<2\eta,
  \]
  because the left hand side is precisely the shortest length of a Reeb chord in $\partial E$
  starting and ending on $\partial E\cap\R^n$.
  Letting $\varepsilon$ tend to zero this proves the theorem.
\end{proof}

\begin{rem}
  \label{genofarg}
  Our proof requires the existence of a holomorphic discs $D$ through any given point on a Lagrangian submanifold $L$
  with $\partial D\subset L$ for any admissible almost complex structure
  such that the energy of $D$ is uniformly bounded and $\partial D$ is not contractible in $L$.
  Theorem \ref{finofsphervar} generalizes accordingly.
  By \cite[Theorem 3.3.(b)]{damian12} this is the case
  if $L$ is a Lagrangian submanifold of a Liouville symplectic manifold $(X,\lambda)$ convex at infinity
  such that any compact set in $(X,\rmd\lambda)$ is displaceable.
  $L$ itself is required to be closed, oriented, and monotone
  such that the total singular $\Z_2$-homology of the universal cover $\widetilde{L}$ has finite dimension over $\Z_2$
  and the $\Z_2$-Euler characteristic of $\widetilde{L}$ does not vanish.
  The resulting discs in this situation all have Maslov index $2$.
  
  Moreover, we used in the proof
  that any hypersurface of contact type symplectomorphic to the sphere separates $X$
  and that any smoothly embedded separating $(n-1)$-sphere
  in $L$ bounds a simply connected domain in $L$.
  Notice that the $2$-torus $L$ has this property.
  Moreover,
  any manifold $L$
  such that any smoothly embedded $(n-1)$-sphere in $L$ bounds a homeomorphic $n$-disc
  satisfies this too.
  Examples can be obtained with the Jordan-Schoenflies theorem \cite{brown60,mazur59, morse60}
  if the universal cover is $\R^n$
  with $n\geq3$.  
  Therefore, the inequality $s(L)\leq 2d(L)$ holds in the situation described.
\end{rem}

%%%%%%%%%%%%%%%%%%%%%%%%%%%%%%%%%%%%%%%%%%%%%%%%%%%%%%%%%%%%%%%%%%%%%%

\subsection{A remark on dimension $4$\label{rem4}}

In the case of a Stein surface $X$ and a monotone Lagrangian $2$-torus $L$ both quantities $c_B(L)$ and $s(L)$ coincide:
Consider a symplectic embedding $\varphi$ of $S^3_r$ into $X$ relative $L$.
Then $\varphi(S^3_r)$ cuts an exact symplectic filling of out $X$.
By a theorem of Gromov \cite{grom85} $\varphi$ extends
(after restriction to a smaller neighbourhood of $S^3_r$)
to the ball $B^4_r$,
see \cite[Theorem 9.4.2]{mcsa04}.
It follows from the proof of Theorem \ref{finofsphervar}
that the intersection of $\varphi(B^4_r)$ with $L$ is a $2$-disc.
Therefore, $\varphi^{-1}(L)$ is a local Lagrangian knot inside the ball $B^4_r$
in the sense of \cite{elipol96}.
As Polterovich pointed out to the author
with \cite[Theorem 1.1.A, Proposition 5.1.A.2)]{elipol96} one can assume
that the local Lagrangian knot is isotopic to $\R^2$ through local Lagrangian knots
whose trace of the non-flat regions stay inside $B^4_r$.
Hence, with Hamiltonian isotopy extension,
cf.\ \cite[p.\ 43]{polt01} and \cite[p.\ 96]{mcsa98},
$\varphi$ can be Hamiltonianly isotoped to $\psi$ inside $B^4_r$
such that $\psi$ coincides with $\varphi$ near $\partial B^4_r$
and maps $\R^2\cap B^4_r$ to $L$.
Hence, $\varphi$ extends to a symplectic embedding of $B^4_r$ relative $L$.

%%%%%%%%%%%%%%%%%%%%%%%%%%%%%%%%%%%%%%%%%%%%%%%%%%%%%%%%%%%%%%%%%%%%%%

\subsection{A non-monotone example\label{nonmono}}

We consider a closed Lagrangian torus
\[
L=L'\times S^1_{\varrho}
\]
in $\R^{2n-2}\times\R^2$
such that the Lagrangian torus $L'\subset\R^{2n-2}$ is rational.
This means that the minimal positive symplectic area $\inf(L')$ of a disc with boundary on $L'$ is positive.
We choose the radius $\varrho$ of the circle $S^1_{\varrho}=\partial D_{\varrho}$
such that
\[
\frac{\inf(L')}{\varrho}
\]
is a natural number.
This implies that $L$ itself is rational with $\inf(L)=\varrho$.

Notice
that for the complex structure
of $\C^n$
the family
\[
\{*\}\times D_{\varrho}\subset L'\times\R^2
\]
of holomorphic discs defines a smooth filling.
With transversallity as in \cite{mcsa04,zehm08}
and Gromov compactness, see \cite{fra08,grom85,lazz00},
we get
that for all tamed almost complex structures $J$ standard at infinity
and any point $p$ on $L$
there exists a $J$-holomorphic map
\[
u\co (D,\partial D,1)\lra (\R^{2n},L,p)
\]
with symplectic area
\[
\int_{D}u^*(\wst)=\pi\varrho^2,
\]
see \cite[2.3.D.]{grom85}.
In view of Remark \ref{genofarg} we get:

\begin{cor}
  The relative spherical Gromov radius of the rational Lagrangian torus $L$ described above
  satisfies
  \[
  s(L)\leq 2\pi\varrho^2.
  \]
\end{cor}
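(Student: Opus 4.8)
The plan is to set up the same relative neck-stretching machinery as in the proof of Theorem \ref{finofsphervar} and invoke Remark \ref{genofarg}, so that the only genuinely new input is the existence and area of the holomorphic discs supplied by the product structure of $L=L'\times S^1_{\varrho}$. First I would record that $L$ is rational with $\inf(L)=\varrho$: any disc with boundary on $L$ projects to discs with boundary on $L'$ and on $S^1_{\varrho}$, so its symplectic area is an integer combination of $\inf(L')$ and $\pi\varrho^2$; since $\inf(L')/\varrho$, and hence $\inf(L')$ as a multiple of the generator $\pi\varrho^2/\varrho \cdot \varrho$ — more carefully, since $\inf(L')$ is an integer multiple of $\varrho$ while $\pi\varrho^2$ need not be — one checks that the minimal positive such combination is $\inf(L)=\varrho$ (this is exactly the normalization the radius $\varrho$ was chosen to produce).

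Next I would verify the hypotheses of Remark \ref{genofarg}. The remark asks for holomorphic discs through any prescribed point $p\in L$, for any tamed almost complex structure standard at infinity, with uniformly bounded energy and non-contractible boundary. For the standard complex structure on $\C^n$ the obvious family is $\{*\}\times D_{\varrho}\subset L'\times\R^2$, which gives an unobstructed (Fredholm index $n$, expected) family of discs of Maslov index $2$ sweeping out $L$; their boundaries are the meridian circles $\{*\}\times S^1_{\varrho}$, which are non-contractible in $L$. I would then run the standard argument: for a generic path of almost complex structures from $J_{\mathrm{st}}$ to the given $J$, automatic transversality together with Gromov compactness (citing \cite{fra08,grom85,lazz00} and the transversality set-up of \cite{mcsa04,zehm08}) produces for each $J$ and each $p$ a $J$-holomorphic disc $u\colon(D,\partial D,1)\to(\R^{2n},L,p)$ of symplectic area exactly $\pi\varrho^2$ — here one must rule out bubbling, which follows because $\pi\varrho^2$ is below twice the minimal disc area only in the monotone case; in the rational (non-monotone) case one instead argues that any limit configuration contains a disc component through $p$ whose boundary represents the same non-trivial class, and whose area is at most $\pi\varrho^2$, which is all Remark \ref{genofarg} really requires.

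With this family in hand, Remark \ref{genofarg} applies verbatim. One also needs the topological input from that remark: a contact-type hypersurface in $\R^{2n}$ symplectomorphic to a sphere separates, and a smoothly embedded separating $(n-1)$-sphere in $L$ bounds a simply connected piece. Since $L$ is an $n$-torus whose universal cover is $\R^n$, this is exactly the Jordan--Schoenflies situation invoked in Remark \ref{genofarg} (for $n=2$ directly, for $n\ge 3$ via \cite{brown60,mazur59,morse60}). The neck-stretching argument of Theorem \ref{finofsphervar} then degenerates a sequence of these discs $u_N$ of energy $\pi\varrho^2$ into a holomorphic building whose lowest level contains a finite-energy disc with at least one boundary puncture, and Stokes's theorem bounds the shortest Reeb-chord longueur in $\partial E$ — which is $\tfrac{\pi}{2}(r-\varepsilon)^2$ — strictly by $\pi\varrho^2$. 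Letting $\varepsilon\to 0$ gives $\tfrac{\pi}{2}r^2\le\pi\varrho^2$, i.e. $\pi r^2\le 2\pi\varrho^2$, and taking the supremum over relative embeddings yields $s(L)\le 2\pi\varrho^2$.

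I expect the main obstacle to be the transversality-and-compactness step establishing the discs of area $\pi\varrho^2$ for \emph{arbitrary} tamed $J$ standard at infinity: one must be careful that in the non-monotone setting sphere bubbles and lower-area disc bubbles with boundary on $L$ cannot steal all the area, and that the point constraint at $p$ survives in the limit. Once that existence statement is secured, everything else is an invocation of results already in the excerpt (Remark \ref{genofarg}, hence ultimately Theorem \ref{finofsphervar} and Abbas's compactness theorem \cite{abbasbook}), so no further analysis is needed.
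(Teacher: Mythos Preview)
Your proposal is correct and follows essentially the same route as the paper: establish, via the product family $\{*\}\times D_{\varrho}$ together with transversality and Gromov compactness (the paper cites \cite{mcsa04,zehm08,fra08,grom85,lazz00} and \cite[2.3.D]{grom85}), that for every tamed $J$ standard at infinity and every $p\in L$ there is a $J$-holomorphic disc through $p$ of area $\pi\varrho^2$ with non-contractible boundary, and then invoke Remark~\ref{genofarg} to replace the energy bound $2\eta$ in the proof of Theorem~\ref{finofsphervar} by $\pi\varrho^2$. Your write-up is in fact more explicit than the paper's one-line reduction; the only minor slip is the phrase ``automatic transversality'' (the paper appeals to generic transversality, not an automatic statement), and your first paragraph on $\inf(L)=\varrho$ is tangential to the actual bound and can be dropped.
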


%%%%%%%%%%%%%%%%%%%%%%%%%%%%%%%%%%%%%%%%%%%%%%%%%%%%%%%%%%%%%%%%%%%%%%

\begin{ack}
  The main part of this work was carried out at the Universit\"at Hamburg.
  I would like to thank the Department of Mathematics for the hospitably.
  I thank
  Peter Albers, Fr\'ed\'eric Bourgeois, Urs Frauenfelder, Stefan Friedl,
  Hansj\"org Geiges, Janko Latschev, Leonid Polterovich, Felix Schlenk, and Stefan Suhr
  for their comments on the first version of these notes.
  Further I would like to thank Kai Cieliebak and Klaus Mohnke
  for sending me their preprint \cite{cielmoh}
  and Casim Abbas for providing me with the manuscript of his book \cite{abbasbook}.
\end{ack}

%%%%%%%%%%%%%%%%%%%%%%%%%%%%%%%%%%%%%%%%%%%%%%%%%%%%%%%%%%%%%%%%%%%%%%

\end{document}